\documentclass[12pt]{amsart}
\usepackage{amsfonts,amsmath,amssymb,amscd}
\usepackage{enumerate}
\usepackage{enumitem}
\usepackage{comment}
\newtheorem{theorem}{Theorem}[section]
\newtheorem{lemma}[theorem]{Lemma}

\newtheorem{remark}[theorem]{Remark}
\newtheorem{proposition}[theorem]{Proposition}

\newcommand{\Z}{\mbox{$\mathbb Z$}}

\newcommand{\N}{\mbox{$\mathbb N$}}     
\newcommand{\C}{\mbox{$\mathbb C$}}     

\begin{document}
	\title[Zeros and $S$-units  in recurrence sequences]{Zeros and $S$-units in sums of terms of recurrence sequences in function fields}
	
	\author[Darsana]{Darsana N}
	\address{Darsana N, Department of Mathematics, National Institute of Technology Calicut, 
		Kozhikode-673 601, India.}
	\email{darsana\_p230059ma@nitc.ac.in; darsanasrinarayan@gmail.com}

	\author[Rout]{S. S. Rout}
	\address{Sudhansu Sekhar Rout, Department of Mathematics, National Institute of Technology Calicut, 
		Kozhikode-673 601, India.}
	\email{sudhansu@nitc.ac.in; lbs.sudhansu@gmail.com}

	\dedicatory{}
	\thanks{2020 Mathematics Subject Classification: 11B37 (Primary), 11D61, 11R58 (Secondary).\\
		Keywords: Linear recurrence sequences, Diophantine equations, $S$-units, function fields}
	\begin{abstract}
		Let $(U_n)_{n\geq 0}$ be a non-degenerate linear recurrence sequence with order at least two defined over a function field and $\mathcal{O}_S^*$ be the set of $S$-units. In this paper, we use a result of Brownawell and Masser  to prove effective results related to the Diophantine equations concerning linear recurrence sequences and $S$-units.  In particular, we provide a finiteness result for the solutions of the Diophantine equation $U_{n_1} + \cdots + U_{n_r} \in \mathcal{O}_S^*$ in nonnegative integers $n_1, \ldots, n_r$. Furthermore, we study the finiteness result of the Diophantine equation $U_n+V_m+W_\ell = 0$ in $(n, m, \ell)\in \N^3$, where $U_n,V_m,W_\ell$ are simple linear recurrence sequences in the function field. 
	\end{abstract}
	\maketitle
	\pagenumbering{arabic}
	\pagestyle{headings}

	\section{Introduction}
	Let $k$ be an algebraically closed field of zero characteristic  and $K$ be a function field in one variable over $k$.  For a given positive integer  $d$, let $(U_{n})_{n \geq 0}$ be a linear recurrence sequence  of order $d$ defined by
	\begin{equation}\label{eq4}
		U_{n} = w_1U_{n-1} + \dots +w_dU_{n-d},
	\end{equation}
	where $w_1,\dots, w_d , U_0,\dots,U_{d-1}\in K$. The characteristic polynomial of the sequence $(U_n)_{n\geq 0}$ is given by
	\begin{equation}\label{eq5}
		f(X):= X^d - w_1X^{d-1}-\dots-w_d \in K[X].
	\end{equation}
	Let $\alpha_1,\dots,\alpha_t$ be distinct roots of the characteristic polynomial $f(X)$ in the splitting field $L$ of $f(X)$. So $L$ is a finite algebraic extension of $K$ and therefore a function field over $k$ of genus $\mathfrak{g}$ (say).  It is well known that for $n\geq 0$,
	\begin{equation}\label{eq6}
		U_n=a_1(n)\alpha_1^n +\cdots + a_t(n)\alpha_{t}^n,
	\end{equation}
	where $a_1(X), \ldots, a_t(X) \in L[X]$. Assuming that $f(X)$ has no multiple roots, we have that the functions $a_i(X) = a_i$ are all constant for $i=1, \ldots, t=d$. Then the sequence $(U_n)_{n \geq 0}$ is called {\it simple}. Further, the sequence $(U_n)_{n \geq 0}$ is called {\it non-degenerate} if no quotient $\alpha_i/\alpha_j$ for $i\neq j$ is constant.

	\vspace{0.3cm}
	
	At first we will discuss two variants of Diophantine equations related to recurrence sequences in number field setting. The first one is about zero set of a linear recurrence sequence and the second one is about $S$-units in recurrence sequences. The zero set of $(U_n)_{n\geq 0}$ which we denote by $\mathcal{Z} = \mathcal{Z}(\{U_n\})$ is the set of solutions in $n\in \Z$ to the equation $U_n =0$. The Skolem-Mahler-Lech Theorem says that $\mathcal{Z}$ is a union of a finite set and a finite number of infinite arithmetic progressions. Moreover, $\mathcal{Z}$ is finite if the sequence is non-degenerate (see \cite{lech, meh, skolem}). All known proofs of Skolem-Mahler-Lech Theorem are ineffective. This means that we do not know any algorithm that allows us to determine the set  $\mathcal{Z}(\{U_n\})$ of a given linear recurrence $U_n$ defined over a number field of characteristic zero. Furthermore, given linear recurrences $U_n^{(1)}, \ldots, U_{n}^{(k)}$ over a number field $K$, we let 
	\[\mathcal{Z}(U_n^{(1)}, \ldots, U_n^{(k)}) := \{(n_1, \ldots, n_k) \in \N^k \mid U_{n_1}^{(1)}+\ldots +U_{n_k}^{(k)} = 0\}.\] It was conjectured in \cite{Cerlienco1987} that, if $K=\mathbb{Q}$, then the property 
	\[\mathcal{Z}(U_n^{(1)}, \ldots, U_n^{(k)}) \neq \emptyset\] is undecidable for every positive integer $k$ large enough. Furthermore, if $(U_n^{(i)})_{n\geq 0}$ for $i=1, \ldots, k$ be linear recurrences with values in a field of positive characteristic $p$, then the solution set  $\mathcal{Z}(U_n^{(1)}, \ldots, U_n^{(k)})$ is $p$-normal and also effectively computable(see \cite{der15}). 
	
	\vspace{0.3cm}
	
	Next we will consider Diophantine equations combining both $S$-units and recurrence sequences in the number field setting. These type of problems have been investigated by many authors (see for example \cite{bhpr1, bhpr2, brp23, sl, hs, rb25}). For a given recurrence sequence $(U_n)_{n\geq 0}$, several authors have also studied the problem of finding $(n, m, z)$ such that
	\begin{equation}\label{eq2}
		U_{n} + U_{m} = 2^{z}.
	\end{equation}
	Pink and Ziegler \cite[Theorem 1]{Pink2016} investigated a more general Diophantine equation 
	\begin{equation}\label{eq2a}
		U_{n} + U_{m} = wp_{1}^{a_{1}} \cdots p_{s}^{a_{s}}
	\end{equation}
	in nonnegative  unknown integers $n, m, a_{1}, \ldots, a_{s}$, where $(U_{n})_{n \geq 0}$ is a binary non-degenerate recurrence sequence, $p_{1}, \ldots, p_{s}$ are distinct primes and $w$ is a nonzero integer with $p_{i} \nmid w$ for $1\leq i \leq s$. In other words, they studied the situation when the sum $U_n+U_m$ is a so-called $S$-unit, i.e., $U_n+U_m$ has only primes of $S$ in its prime factorization. Further, certain variations of the Diophantine equation \eqref{eq2a} has also been studied (see e.g. \cite{mr2019, mr}) for binary recurrence sequences. 
	
	\vspace{0.3cm}
	
	The problems of estimating the number of zeros appearing in a sequence $(U_n)_{n\geq 0}$ or more generally estimating the number of solutions of  polynomial-exponential equations over a function field are also available. For example, the number of zeros of recurrences in one variable has been estimated \cite{bombieri, Fuchs2005, schmidt}.  In \cite{derksen}, Derksen proved the remarkable result that the zero set of a linear recurrence sequence defined over positive characteristic can always be described in terms of finite automata. The similar analysis for polynomial-exponential equations in several variables has been discussed in \cite{adam, zannier2004}. Furthermore, many finiteness results on the number of solutions of Diophantine equations such as linear equations in $S$-units, and $S$-units in linear recurrence sequences over function field have been studied by many authors \cite{eve1986, Fuchs2021, zannier93}.  In particular, Heintze \cite{heintze} proved that there are only finitely many $S$-units in a linear recurrence sequence of order at least two. 
	
	In \cite{Fuchs2005}, Fuchs and Peth\"{o} considered the Diophantine equation of the type 
	\begin{equation}\label{eq20}
		G_n =H_m,
	\end{equation} where $(G_n)_{n\geq 0}$ and $(H_m)_{m\geq 0}$ are linear recurrence sequences in the function field $L$. 
	In particular, they proved under certain conditions that there exists an effectively computable constant $C$ such that $\max (n, m)\leq C$ for all $(n, m)\in \N^2$ satisfying \eqref{eq20}, unless there are infinitely many solutions coming from a trivial identity. 
	
	In this paper, we are interested in bounding the indices of a linear recurrence sequence, defined over a function field in one variable over an algebraically closed field of characteristic zero, if the sum of more than two terms of this sequence is an $S$-unit. We also study the finiteness result of the Diophantine equation $U_n+V_m+W_\ell = 0$ in $(n,m,\ell)\in \N^3$, where $U_n, V_m,W_\ell$ are simple linear recurrence sequences in the function field.

	\section{Notation and results} 
	We will start with definitions of the discrete valuations on the field $k(x)$, where $x$ is transcendental over $k$.  For $a\in k$ and $g(x)\in k(x)$, we define the valuation $\nu_a$ as follows. Let $g(x) = (x-a)^{\nu_a(g)} p(x)/q(x)$, where $p, q\in k[x]$ and $p(a)q(a)\neq 0$. Further, we write  $\nu_{\infty}(g) = \deg q- \deg p$ for $g(x) = p(x)/q(x)$, where $p, q\in k[x]$. These functions $\nu:k(x) \to \Z$ are all valuations in $k(x)$ up to equivalence. If $\nu_a(g) >0$, then $a$ is called a zero of $g$, and if $\nu_a(g) <0$, then $a$ is called a pole of $g$, where $a\in k\cup \{\infty\}$.  Let $L$ be a finite algebraic extension of function field $k(x)$. All valuations can be extended in at most $[L: k(x)]$ ways to a valuation on $L$ and again in this way one obtains all discrete valuations on $L$.  A valuation on $L$ is called {\em finite} if it extends $\nu_a$ for some $a\in k$ and {\em infinite} if it extends $\nu_{\infty}$. Furthermore, both in $k(x)$ as well as in $L$ the sum formula 
	\begin{equation*}
		\sum_{\nu} \nu(f) = 0
	\end{equation*}
	holds, where the sum is taken over all valuations in the relevant function field. Each valuation in a function field corresponds to a place and vice versa. The places can be thought of as the equivalence classes of valuations (see for more detail \cite{sti1993}).
	
	For a finite set $S$ of valuations on $L$, we call an element of $L$ an $S$-unit, if it has poles and zeros only at places in $S$, or equivalently, the set of $S$-units in $L$ is 
	\begin{equation}
		\mathcal{O}_S^{*} = \{f\in L\mid \nu(f) = 0 \;\; \mbox{for all}\;\; \nu\not \in S\}.
	\end{equation}
In the following result, we investigate a generalization of a result proposed by Heintze (see \cite{heintze}).
	\begin{theorem}\label{thm1}
		Let $L$ be a function field in one variable over an algebraically closed field $k$ of zero characteristic  and $S$ be a finite set of valuations on $L$. Let $(U_n)_{n\geq 0}$ be a simple linear recurrence sequence of order at least two defined over $L$. Assume that $(U_n)_{n\geq 0}$ is non-degenerate,  no $\alpha_i$ lies in $k$, and for all $i\neq j$ the characteristic roots $\alpha_i$ and $\alpha_j$ are multiplicatively independent. Then there exists an effectively computable constant $C$ such that 
		\begin{equation}\label{maineq}
			U_{n_1}+\cdots+U_{n_r}\in \mathcal{O}_S^{*}
		\end{equation}
		for $n_1>\cdots> n_r$ implies $\max(n_1, \ldots, n_r)\leq C$.
	\end{theorem}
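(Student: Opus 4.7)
The plan is to convert hypothesis~\eqref{maineq} into a vanishing $S$-unit relation and then invoke the Brownawell--Masser theorem, after which a case analysis on minimal vanishing subsums delivers the bound. Set $u := U_{n_1} + \cdots + U_{n_r} \in \mathcal{O}_S^*$; using the simple expansion $U_{n_j} = \sum_{i=1}^{d} a_i \alpha_i^{n_j}$ one can rewrite \eqref{maineq} as the vanishing identity
\[
\sum_{j=1}^{r} \sum_{i=1}^{d} a_i \alpha_i^{n_j} + (-u) = 0.
\]
Enlarge $S$ to a finite set $S_0 \supseteq S$ of valuations on $L$ containing every place at which some $a_i$ or some $\alpha_i$ has a zero or a pole; then every summand above, as well as $u$ itself, is an $S_0$-unit, and the left-hand side is a vanishing sum of $N := rd+1$ $S_0$-units.

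Partition the $N$ summands into minimal vanishing subsums $I_1, \ldots, I_m$ (blocks whose subsums vanish but no proper sub-block vanishes), arranged so that $-u \in I_1$. Applying the Brownawell--Masser theorem separately to each block yields an effectively computable constant $C_1 = C_1(|S_0|, \mathfrak{g}, N)$ such that $H(x/y) \leq C_1$ for any two terms $x, y$ lying in the same block.

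Two observations are essential. \emph{First}, any block $I_\ell$ with $\ell \geq 2$ contains only terms $a_i \alpha_i^{n_j}$ and cannot consist solely of terms with a single index $i = i_0$: otherwise $a_{i_0} \sum_{j \in J} \alpha_{i_0}^{n_j} = 0$ with $a_{i_0} \neq 0$ and distinct non-negative integers $n_j$ would force $\alpha_{i_0}$ to satisfy a nonzero polynomial over $k$, hence $\alpha_{i_0} \in k$ by algebraic closure, contradicting the hypothesis. \emph{Second}, multiplicative independence of $\alpha_i$ and $\alpha_{i'}$ (for $i \neq i'$) is equivalent to linear independence of the divisors $\operatorname{div}(\alpha_i)$ and $\operatorname{div}(\alpha_{i'})$ in $\operatorname{Div}(L) \otimes \Q$; the resulting function $(a, b) \mapsto H(\alpha_i^a \alpha_{i'}^b)$ on $\Z^2$ is positively homogeneous of degree $1$ and strictly positive away from the origin, so it admits an effectively computable lower bound $H(\alpha_i^a \alpha_{i'}^b) \geq c_{i,i'} \max(|a|, |b|)$ with $c_{i,i'} > 0$.

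It remains to bound $n_1 = \max(n_1, \ldots, n_r)$ by a case split. \textbf{Case (a):} some term $a_i \alpha_i^{n_1}$ lies in a block $I_\ell$ with $\ell \geq 2$. By the first observation, $I_\ell$ also contains a term $a_{i'} \alpha_{i'}^{n_{j'}}$ with $i' \neq i$, and Brownawell--Masser gives $H(\alpha_i^{n_1} \alpha_{i'}^{-n_{j'}}) \leq C_1 + H(a_i/a_{i'})$; the second observation then bounds $\max(n_1, n_{j'})$ effectively. \textbf{Case (b):} every term $a_i \alpha_i^{n_1}$, $i = 1, \ldots, d$, lies in $I_1$. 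Since $d \geq 2$, both $a_1 \alpha_1^{n_1}$ and $a_2 \alpha_2^{n_1}$ belong to $I_1$, so Brownawell--Masser yields $n_1\, H(\alpha_1/\alpha_2) \leq C_1 + H(a_1/a_2)$; non-degeneracy gives $\alpha_1/\alpha_2 \notin k$ and hence $H(\alpha_1/\alpha_2) > 0$, producing an effective bound on $n_1$. The main obstacle in this plan is the block $I_1$ containing the uncontrolled element $-u$: any ratio bound directly involving $-u$ would depend on $H(u)$, which is not controlled. The two observations above are precisely what bypass this difficulty, by producing in both cases an inequality involving only the terms $a_i \alpha_i^{n_j}$.
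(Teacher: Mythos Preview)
Your argument is correct, and it is organized more economically than the paper's own proof. Both proofs begin identically: enlarge $S$ so that all $a_i,\alpha_i$ are $S$-units, write \eqref{maineq} as a vanishing $S$-unit sum with $rd+1$ terms, and decompose into minimal vanishing subsums in order to apply Proposition~\ref{browthm}. From that point the paper runs a four-case analysis on the shape of \emph{every} minimal subsum (same exponent present; one exponent already bounded; distinct exponents with distinct roots; distinct exponents with the \emph{same} root), and the last case requires an iterative rewriting of \eqref{eq10} that shrinks the number of summands step by step until one reaches a single-index equation and invokes Heintze's theorem \cite{heintze}. Your proof sidesteps this iteration entirely by focusing only on the terms carrying the largest exponent $n_1$ and making the one structural observation the paper does not isolate: a minimal vanishing block \emph{not} containing $-u$ can never consist of terms with a single characteristic root $\alpha_{i_0}$, because $\sum_{j\in J}\alpha_{i_0}^{n_j}=0$ with distinct $n_j$ would place $\alpha_{i_0}$ in $k$. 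This forces any such block that contains some $a_i\alpha_i^{n_1}$ to also contain $a_{i'}\alpha_{i'}^{n_{j'}}$ with $i'\neq i$, after which Lemma~\ref{lem2} (your ``second observation'') bounds $n_1$ directly; and if instead every $a_i\alpha_i^{n_1}$ sits in the block containing $-u$, then with $d\ge 2$ two of them are in the same block and non-degeneracy finishes. What your route buys is the elimination of Case~IV and of the appeal to \cite{heintze}; what the paper's route buys is that along the way it bounds \emph{all} the $n_j$ explicitly rather than only $n_1$ --- but since $n_1=\max(n_1,\dots,n_r)$, that extra information is not needed for the stated conclusion.
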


	To state our next result, we consider the following. Suppose
	\begin{equation}\label{eq21}
		U_n=\sum_{i=1}^{d_1}a_i\alpha_i^{n},\quad  V_m=\sum_{i=1}^{d_2}b_i\beta_i^{m},\quad  W_\ell=\sum_{i=1}^{d_3}c_i\gamma_i^{\ell}
	\end{equation} are three simple linear recurring sequences in the function field $L$. In particular, we study the Diophantine equation
	\begin{equation}\label{eq22}
		U_n+V_m+W_\ell=0,  
	\end{equation} \text{with }
	\begin{equation}\label{eq11}
		a_i\alpha_i^n+b_j\beta_j^m+c_k\gamma_k^{\ell}\neq 0,
	\end{equation}
	in the unknowns $(n,m,\ell)\in \N^3$ and $1\leq i\leq d_1, 1\leq j\leq d_2, 1\leq k\leq d_3$. Furthermore, we write 
	\begin{equation}
		U_{n}=U_{n}^{(1)}+U_{n}^{(2)}, \quad V_{m}=V_{m}^{(1)}+V_{m}^{(2)},\quad \text{and}\quad W_{\ell}=W_{\ell}^{(1)}+W_{\ell}^{(2)}, 
	\end{equation} where
	\begin{equation*}
	U_{n}^{(1)}=\sum_{j=1}^{t_1}a_j\alpha_j^n,\quad U_n^{(2)}=\sum_{j=t_1+1}^{d_1}a_j\alpha_j^n, \quad (1\leq t_1\leq d_1 ),
	\end{equation*}
	\begin{equation*}
		V_{m}^{(1)}=\sum_{j=1}^{t_2}b_j\beta_j^m,\quad V_m^{(2)}=\sum_{j=t_2+1}^{d_2}b_j\beta_j^m,\quad (1\leq t_2\leq d_2 ), 
	\end{equation*}
	\begin{equation*}
		W_{\ell}^{(1)}=\sum_{j=1}^{t_3}c_j\gamma_j^\ell, \quad W_\ell^{(2)}=\sum_{j=t_3+1}^{d_3}c_j\gamma_j^\ell,\quad (1\leq t_3\leq d_3).
	\end{equation*}	
In our next result, we extend an earlier result by Fuchs and Peth\H{o} (see \cite[Corollary 4.1]{Fuchs2005}).
	\begin{theorem}\label{thm2}
		Suppose  $U_n,V_m$ and $ W_{\ell}$ are simple linear recurring sequences in the function field $L$ and $\alpha_i,\beta_j,\gamma_k\in L^{*}$ for $1\leq i\leq d_1, 1\leq j\leq d_2, 1\leq k\leq d_3 $ with $d_{1},d_2, d_3\geq 2$ are the simple roots of the respective characteristic polynomials. Assume that no $\alpha_i,\beta_i,\gamma_i$ and no ratio $\alpha_i/\alpha_j,\beta_i/\beta_j,\gamma_i/\gamma_j$ lies in $k^{*}$ for each pair of subscripts $i$ and $j$. Then there exists an effectively computable constant $C$ such that all the solutions $(n, m,\ell)$ of \eqref{eq22} which satisfying \eqref{eq11}, we have 
		\begin{equation*}
			\max(n,m,\ell) \leq C
		\end{equation*}
		unless there exist integers $n_0,$ $\ell_0,p_0,q_0,$ $n_1,m_1,p_{1},q_{1},m_2,\ell_2,p_{2},$ and $q_{2}$ such that 
		\begin{equation}\label{thm2eq1}
			U_{n_0+p_0{\ell}}^{(1)}+W_{\ell_0+q_0{\ell}}^{(1)}+U_{n_1+p_{1}{m}}^{(2)}+V_{m_1+q_{1}{m}}^{(1)}=0
		\end{equation}
		or \begin{equation}\label{thm2eq2}
			U_{n_0+p_0{m}}^{(1)}+V_{m_0+q_0{m}}^{(1)}+	U_{n_1+p_1{\ell}}^{(2)}+W_{\ell_1+q_1{\ell}}^{(1)}+V_{m_2+p_{2}{\ell}}^{(2)}+W_{\ell_2+q_{2}{\ell}}^{(2)}=0.
		\end{equation}
		 In this case, we have infinitely many solutions.
	\end{theorem}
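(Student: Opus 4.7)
The plan is to view equation \eqref{eq22} as a polynomial-exponential identity with $d_1+d_2+d_3$ summands $a_i\alpha_i^n$, $b_j\beta_j^m$, $c_k\gamma_k^\ell$ and to apply the Brownawell--Masser $S$-unit theorem repeatedly, in the same spirit as the proof of Theorem \ref{thm1}. After enlarging $S$ so that each coefficient and each characteristic root is an $S$-unit, every summand is an $S$-unit. Dividing \eqref{eq22} by any nonzero summand produces a normalised $S$-unit equation, and Brownawell--Masser yields the standard dichotomy: either the heights of all pairwise quotients of summands are bounded by an effectively computable constant, or some proper non-empty subsum vanishes.

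In the first branch, for any two roots $\alpha_i,\alpha_{i'}$ of $U_n$ with $i\neq i'$, the height of $(\alpha_i/\alpha_{i'})^n$ is bounded; since $\alpha_i/\alpha_{i'}\notin k^{*}$ its height is strictly positive, so $n$ is effectively bounded. The same reasoning controls $m$ and $\ell$, yielding $\max(n,m,\ell)\leq C$.

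The core of the argument is the second branch. Hypothesis \eqref{eq11} forbids a minimal vanishing subsum consisting of exactly one term from each sequence. A vanishing subsum confined to a single sequence, say $\sum_{i\in I}a_i\alpha_i^n=0$ with $|I|\geq 2$, is impossible for large $n$: applying Brownawell--Masser to this smaller $S$-unit equation together with $\alpha_i/\alpha_{i'}\notin k^{*}$ forces $n$ to be bounded. Hence any minimal vanishing subsum involves exactly two of the three sequences, for instance
\begin{equation*}
\sum_{i\in I}a_i\alpha_i^{n}+\sum_{j\in J}b_j\beta_j^{m}=0.
\end{equation*}
Iterating Brownawell--Masser on such a two-sequence equation either bounds $n$ and $m$, or terminates at a two-term identity $a_i\alpha_i^{n}+b_j\beta_j^{m}=0$, which forces $\alpha_i^{n}/\beta_j^{m}\in k^{*}$. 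Since $\alpha_i,\beta_j\notin k^{*}$, this yields an effective linear relation $n=pm+n_0$ with $p,n_0\in\Z$. Substituting such relations back into \eqref{eq22} and re-running the argument on the complementary subsum leads, after a case analysis on which pairs of sequences get linked, to precisely the two normal forms \eqref{thm2eq1} and \eqref{thm2eq2}: in \eqref{thm2eq1} only $U_n$ splits and its two pieces are tied to $W_\ell$ and $V_m$ respectively, whereas in \eqref{thm2eq2} all three sequences split in the ``triangle'' configuration $U^{(1)}\leftrightarrow V^{(1)}$, $U^{(2)}\leftrightarrow W^{(1)}$, $V^{(2)}\leftrightarrow W^{(2)}$. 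Conversely, each such identity clearly produces an infinite family of solutions.

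The main obstacle is the combinatorial bookkeeping inside the second branch: one must systematically track how the $d_1+d_2+d_3$ summands partition under repeated application of Brownawell--Masser, converting each surviving vanishing subsum either into a bound on an individual index or into a $\Z$-linear dependence between a pair of indices, and invoking the non-degeneracy hypotheses at every level of the induction. Verifying that \eqref{thm2eq1} and \eqref{thm2eq2} really exhaust the infinite families of solutions, rather than giving rise to additional degenerate configurations, is the delicate point of the argument.
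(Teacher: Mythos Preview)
Your proposal follows the same overall strategy as the paper's own proof: enlarge $S$, expand \eqref{eq22} as a sum of $d_1+d_2+d_3$ $S$-units, apply Proposition~\ref{browthm} to minimal vanishing subsums, and use the non-degeneracy hypotheses to convert height bounds on quotients into bounds on the exponents or into $\Z$-linear relations between pairs of exponents. Your identification of the two final configurations \eqref{thm2eq1} and \eqref{thm2eq2} is also correct.

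There is, however, a genuine gap in your reduction to two-term subsums. From ``\eqref{eq11} forbids a one-from-each three-term subsum'' and ``a subsum confined to a single sequence bounds that index'' you conclude that \emph{any minimal vanishing subsum involves exactly two of the three sequences}. This does not follow: nothing you have said excludes, for instance, a minimal vanishing subsum of the shape $a_{i}\alpha_{i}^{n}+a_{i'}\alpha_{i'}^{n}+b_{j}\beta_{j}^{m}+c_{k}\gamma_{k}^{\ell}=0$, which involves all three sequences and is not covered by \eqref{eq11}. The paper closes this gap by first treating the case where a minimal vanishing subsum contains \emph{two summands with the same exponent}: dividing by one of them and applying Proposition~\ref{browthm} bounds the height of $(\alpha_i/\alpha_{i'})^{n}$, hence $n$, and then the remaining exponents present in that subsum are bounded as well. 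Only after disposing of this case may one assume that every minimal vanishing subsum has at most one term per exponent, hence at most three terms, and then \eqref{eq11} forces exactly two terms.

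The second place where your sketch stops short is the ``case analysis on which pairs of sequences get linked''. The paper organises this not by chasing the recursion, but by a simple counting dichotomy on the orders: assuming $d_1\geq d_2\geq d_3$, either $d_1\geq d_2+d_3$, in which case every two-term subsum must pair an $\alpha$-term with a $\beta$- or $\gamma$-term (forcing $d_1=d_2+d_3$) and one arrives at \eqref{thm2eq1}; or $d_1<d_2+d_3$, in which case all three types of pairings $(\alpha,\beta)$, $(\alpha,\gamma)$, $(\beta,\gamma)$ occur, giving the triangle configuration \eqref{thm2eq2}. Each branch is then finished off by reducing to the two-sequence result of Fuchs and Peth\H{o} (Lemma~\ref{lemfuchs}). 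Your outline is compatible with this, but you would need to make this counting explicit to actually reach the two normal forms rather than merely assert that they are exhaustive.
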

	
	\begin{remark}\label{rem3}
		Let $U_{n}=(x-t)^{n}+(x-2t)^{n},V_{m}=(x-t)^{m}+2(x-2t)^{m}$ and $W_{\ell}=(x-2t)^{\ell}-2(x-t)^{\ell}$. Then one can see that $U_{n}+V_{m}+W_{\ell}=0$ has infinitely many solutions if $n=m=\ell$.
Observe  that the minimal vanishing subsums having all the distinct characteristic  roots are 
		\begin{align*}
			(x-t)^{n}+(x-t)^{n}-2(x-t)^{n}=0,\\
			(x-2t)^{n}-2(x-2t)^{n}+(x-2t)^{n}=0,
		\end{align*} and they do not satisfy \eqref{eq11}.
	\end{remark}	
\section{Preliminaries}
	The {\em projective height} $\mathcal{H}$ of $u_1, \ldots, u_n$ of $L/k(x)$, where $n\geq 2$ and not all $u_i$ is zero, is defined by
	\begin{equation}\label{eq1}
		\mathcal{H}(u_1, \ldots, u_n) = -\sum_{\nu} \min (\nu(u_1), \ldots, \nu(u_n)),
	\end{equation}
	where $\nu$ runs over all places of $L/k$. For a single element $f\in L^{*}$, we set
	\begin{equation}\label{eq2}
		\mathcal{H}(f) :=\mathcal{H}(1, f)= -\sum_{\nu} \min (0, \nu(f)),
	\end{equation}
	where the sum is taken over all valuations of $L$; thus for $f\in k(x)$ the height $\mathcal{H}(f)$ is the number of poles of $f$ counted according to multiplicity. We note that if $f\in k[x]$, then $\mathcal{H}(f)  = [L:k(x)]\deg f$.  Also, observe that $\nu(f) \neq 0$ only for a finite number of valuations $\nu$ and hence, by the sum formula $\mathcal{H}(f) = \sum_{\nu} \max (0, \nu(f))$. For $f = 0$, we define $\mathcal{H}(f) = \infty$. This height function satisfies some basic properties which we listed below.
	
	\begin{lemma}\label{lem1}
		Denote as above by $\mathcal{H}$ the projective height on $L/k(x)$. Then for $f, g\in L^{*}$ the following properties hold:
		\begin{enumerate}[label=(\alph*)]
			\item $\mathcal{H}(f) \geq 0$ and $\mathcal{H}(f) = \mathcal{H}(1/f)$,
			\item $\mathcal{H}(f) - \mathcal{H}(g) \leq \mathcal{H}(f+g)\leq \mathcal{H}(f) +\mathcal{H}(g)$,
			\item $\mathcal{H}(f) -\mathcal{H}(g) \leq \mathcal{H}(fg)\leq \mathcal{H}(f) +\mathcal{H}(g)$,
			\item $\mathcal{H}(f^n) = |n|\cdot \mathcal{H}(f)$,
			\item $\mathcal{H}(f)=0\iff f\in k^{*}$,
			\item $\mathcal{H}(P(f)) = \deg P\cdot \mathcal{H}(f)$ for any $P\in k[T]\setminus \{0\}$.         
		\end{enumerate}
	\end{lemma}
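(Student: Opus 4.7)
The plan is to derive each of the six properties directly from the defining formula $\mathcal{H}(f)=-\sum_\nu\min(0,\nu(f))$, together with two standard facts about valuations on a function field: the ultrametric inequality $\nu(f+g)\geq\min(\nu(f),\nu(g))$ together with the additivity $\nu(fg)=\nu(f)+\nu(g)$, and the sum formula $\sum_\nu\nu(f)=0$. Each part is short, so I would present them in the natural order (a)–(f), reusing earlier parts in the proofs of the later ones.

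For (a), nonnegativity is immediate since every summand $-\min(0,\nu(f))$ is nonnegative. The identity $\mathcal{H}(f)=\mathcal{H}(1/f)$ follows by rewriting $\mathcal{H}(f)=\sum_\nu\max(0,\nu(f))$ via the sum formula and observing that $\max(0,\nu(f))=-\min(0,-\nu(f))=-\min(0,\nu(1/f))$. For (b), the ultrametric inequality gives $\min(0,\nu(f+g))\geq\min(0,\nu(f))+\min(0,\nu(g))$, and summing over $\nu$ yields $\mathcal{H}(f+g)\leq\mathcal{H}(f)+\mathcal{H}(g)$; the lower bound comes from applying this to $f=(f+g)+(-g)$ and using $\mathcal{H}(-g)=\mathcal{H}(g)$. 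For (c), additivity of $\nu$ on products combined with the elementary inequality $\max(0,a+b)\leq\max(0,a)+\max(0,b)$ gives $\mathcal{H}(fg)\leq\mathcal{H}(f)+\mathcal{H}(g)$, and the reverse inequality follows from $f=(fg)\cdot(1/g)$ together with (a). For (d), an induction using (c) handles $n\geq 0$, and (a) reduces negative $n$ to positive $n$.

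For (e), one direction is clear: if $f\in k^*$ then $\nu(f)=0$ at every place of $L/k$, so $\mathcal{H}(f)=0$. Conversely, $\mathcal{H}(f)=0$ forces $\nu(f)\geq 0$ at every place, so $f$ is a global section of the structure sheaf of the smooth projective model of $L$; since $k$ is algebraically closed and the curve is complete, this global section must be a constant, giving $f\in k^*$ because $f\in L^*$.

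The one part requiring a little care is (f), so I would treat it last. Factor $P(T)=c\prod_{i=1}^{d}(T-\zeta_i)$ with $c\in k^*$ and $\zeta_i\in k$, which is possible exactly because $k$ is algebraically closed. The upper bound $\mathcal{H}(P(f))\leq d\cdot\mathcal{H}(f)$ then follows by iterating (c) and (b), using $\mathcal{H}(c)=\mathcal{H}(\zeta_i)=0$ by (e). For the matching lower bound I would argue directly at the level of valuations: at any place $\nu$ with $\nu(f)\geq 0$ one has $\nu(f-\zeta_i)\geq 0$ for every $i$ and hence $\nu(P(f))\geq 0$, while at any place $\nu$ with $\nu(f)<0$ the strict ultrametric inequality gives $\nu(f-\zeta_i)=\nu(f)$ for every $i$, so $\nu(P(f))=d\,\nu(f)$. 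Summing $-\min(0,\nu(P(f)))$ over all places therefore reproduces $d\,\mathcal{H}(f)$ exactly. The main (and only) subtle point in the whole lemma is this last valuation analysis, where it is essential that $k$ is algebraically closed so that all roots $\zeta_i$ of $P$ lie in the constant field and contribute trivially to every valuation.
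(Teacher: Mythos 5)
The paper does not actually prove this lemma: it simply refers the reader to \cite[Lemma 2]{Fuchs2019}. Your proposal therefore supplies a genuine proof where the paper only gives a citation, and the argument you give is the standard one: everything follows from the definition $\mathcal{H}(f)=-\sum_\nu\min(0,\nu(f))$, the sum formula, and the (strict) ultrametric inequality, with part (f) reduced to a place-by-place computation after factoring $P$ over the algebraically closed constant field. Parts (a), (b), (c), (e) and (f) are correct as you describe them; in particular your observation that $\nu(f)<0$ forces $\nu(f-\zeta_i)=\nu(f)$ for constants $\zeta_i$ is exactly the key point for (f), and it subsumes the upper bound you propose to get separately from (b) and (c). The one step I would push back on is (d): induction on (c) only yields $\mathcal{H}(f^n)\le n\,\mathcal{H}(f)$, since the lower bound in (c) gives $\mathcal{H}(f^n)\ge\mathcal{H}(f^{n-1})-\mathcal{H}(f)$, which does not close the induction for the reverse inequality. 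The equality in (d) should instead be read off directly from the definition: for $n\ge 0$ one has $\max(0,\nu(f^n))=\max(0,n\nu(f))=n\max(0,\nu(f))$ at every place, so $\mathcal{H}(f^n)=n\,\mathcal{H}(f)$ exactly, and negative $n$ reduces to this via (a) as you say. Finally, note a small defect in the statement itself rather than in your proof: if $f\in k^*$ happens to be a root of $P$, then $P(f)=0$ and $\mathcal{H}(P(f))=\infty$ by the paper's convention, while $\deg P\cdot\mathcal{H}(f)=0$; so (f) implicitly requires $P(f)\neq 0$.
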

	\begin{proof}
		For a detailed proof, you can refer \cite[Lemma 2]{Fuchs2019}.
	\end{proof}
	
	The following theorem is due to Brownawell and Masser \cite{brow1986} and this is an immediate consequence of \cite[Theorem B, Corollary 1]{Fuchs2012}. This result gives an upper bound for the height of $S$-units, which arise as a solution of certain $S$-unit equations.
	
	\begin{proposition}[Brownawell-Masser]\label{browthm}
		Let $F/\C$ be a function field of one variable of genus $\mathfrak{g}$. Moreover, for a finite set $S$ of valuations, let $u_1, \ldots, u_n$ be not all constant $S$-units and 
		\[1+u_1+\cdots +u_n= 0,\] where no proper subsum of the left hand side vanishes. Then we have 
		\[\max_{i=1, \ldots, n} \mathcal{H}(u_i) \leq \binom{n}{2}(|S|+\max(0, 2\mathfrak{g}-2)). \]
	\end{proposition}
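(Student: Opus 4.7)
The plan is to adapt the classical Brownawell--Masser Wronskian argument on function fields. First, I would put the equation in homogeneous form by setting $u_0 := 1$, so that $u_0 + u_1 + \cdots + u_n = 0$, no proper subsum vanishes, and each $u_i$ is an $S$-unit (the constant $u_0=1$ trivially is one). Fix a separating transcendental $x$ for $F/\C$ and the derivation $D = d/dx$, with field of constants $\C$.

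The crucial preliminary step is to show that every $n$ of the functions $u_0, \ldots, u_n$ are linearly independent over $\C$. Any non-trivial $\C$-linear dependence among such a subset would, after subtracting a scalar multiple of the full relation $\sum_{i=0}^n u_i = 0$ and using that the $u_i$ are not all constant, produce a proper vanishing subsum, contradicting the hypothesis. Consequently, for the $n \times (n+1)$ matrix $M$ with entries $M_{ij} = D^{i-1} u_j$, every $n \times n$ minor $W_j$ (obtained by deleting the $j$-th column) is non-zero; since $(1, 1, \ldots, 1)^\top$ spans the kernel of $M$, Cramer's rule gives $u_j = \pm\, W_j / W_0$ for $j = 1, \ldots, n$.

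Next I factor out the $S$-unit row entries to form the ``normalized Wronskian'' $V_j := W_j / \prod_{k \neq j} u_k = \det\bigl(D^{i-1} u_k / u_k\bigr)_{i, \, k \neq j}$. Each entry $D^{i-1} u_k / u_k$ expands as a universal polynomial in the logarithmic derivatives $D u_k/u_k, \, D^2 u_k/u_k, \ldots$; since $u_k$ is an $S$-unit, $D u_k / u_k$ has only simple poles, located at places in $S$ or at zeros of $dx$, and higher iterates of $D$ increase pole orders in a controlled linear fashion. Expanding the determinant, tallying pole contributions at each place outside $S$, and invoking that the pole divisor of $dx$ has total degree $2\mathfrak{g} - 2$, one arrives at a divisor bound of the form $\mathcal{H}(V_j) \leq \binom{n}{2}\bigl(|S| + \max(0, 2\mathfrak{g} - 2)\bigr)$.

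Finally, since $\mathcal{H}(u_j) = \mathcal{H}(W_j / W_0) = \mathcal{H}(V_j / V_0)$ after the $\prod_{k \neq j, 0} u_k$ factors cancel in numerator and denominator, Lemma \ref{lem1}(b),(c) yields the stated bound. The principal technical obstacle is the bookkeeping in the Wronskian step: one must track multiplicities contributed separately by places in $S$, by the zeros of the differential $dx$ (producing the genus term $2\mathfrak{g} - 2$), and by the alternating cancellations inside the determinant, in order to land on exactly the constant $\binom{n}{2}\bigl(|S| + \max(0, 2\mathfrak{g}-2)\bigr)$ rather than a weaker version with a larger combinatorial factor. Alternatively, as indicated in the text, this Wronskian analysis is carried out in the precise form needed in \cite[Theorem B, Corollary 1]{Fuchs2012}, from which the proposition follows immediately.
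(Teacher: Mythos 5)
The paper does not actually prove this proposition --- it is imported as a quoted result of Brownawell--Masser via \cite[Theorem B, Corollary 1]{Fuchs2012} --- so your closing sentence (cite and stop) is exactly what the authors do. The problem is with the direct Wronskian argument you sketch before that: its first substantive step has a genuine gap. The claim that the no-vanishing-proper-subsum hypothesis forces every $n$ of the functions $u_0,\ldots,u_n$ to be linearly independent over $\C$ is false. Take $u_1=x$, $u_2=2x$, $u_3=-1-3x$ in $\C(x)$ with $S=\{0,-1/3,\infty\}$: then $1+u_1+u_2+u_3=0$, none of the proper subsums vanishes (check the singletons, pairs and triples), yet $2u_1-u_2=0$, so the subset $\{u_0,u_1,u_2\}$ is linearly dependent; the minor $W_3=W(1,x,2x)$ vanishes (as does $W_0$), and the identity $u_j=\pm W_j/W_0$ becomes meaningless. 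Your proposed justification fails because subtracting a scalar multiple of the full relation from a dependence $\sum_i c_iu_i=0$ yields a linear combination whose coefficients need not lie in $\{0,1\}$, hence need not be a \emph{subsum} of the original equation: the hypothesis only excludes vanishing $\{0,1\}$-combinations, not arbitrary $\C$-linear relations.

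Handling this possible degeneracy is precisely the nontrivial reduction carried out by Brownawell and Masser (and in the passage from Theorem B to Corollary 1 in \cite{Fuchs2012}): one works with a maximal $\C$-linearly independent subset of the terms, applies the Wronskian estimate to the correspondingly smaller determinant, and uses the no-vanishing-subsum condition to recover bounds for all the $u_i$ while checking that the resulting constant still does not exceed $\binom{n}{2}\bigl(|S|+\max(0,2\mathfrak{g}-2)\bigr)$. The rest of your sketch --- normalizing the minors by logarithmic derivatives, locating the poles at $S$ and at the zeros of $dx$, the genus contribution $2\mathfrak{g}-2$, and the count of $\binom{n}{2}$ total derivative orders --- has the right shape, but as written your argument only establishes the proposition under the stronger hypothesis that, up to scalar, the given relation is the \emph{only} $\C$-linear relation among $1,u_1,\ldots,u_n$.
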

	
	The following result is about multiplicatively independent elements.
	
	\begin{lemma}\label{lem2}
		Let $\gamma, \delta\in L/\C$ be multiplicatively independent and $m, n\in \N$. Assume that 
		\[\mathcal{H}\left(\frac{\gamma^n}{\delta^m}\right) \leq M.\]
		Then there exists an effectively computable constant $C$, depending only on $\gamma, \delta, \mathfrak{g}$ and $M$, such that
		\[\max (n, m) \leq  C.\]
	\end{lemma}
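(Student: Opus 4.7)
The plan is to reduce the lemma to elementary linear algebra on the divisor group, by translating multiplicative independence of $\gamma$ and $\delta$ into linear independence of the principal divisors $(\gamma)$ and $(\delta)$. The key preparatory observation is that $|\nu(f)|\le \mathcal{H}(f)$ for any nonzero $f\in L$ and any place $\nu$; indeed, the sum formula gives
\[\mathcal{H}(f)=\sum_{\nu}\max(0,\nu(f))=\sum_{\nu}\max(0,-\nu(f)),\]
so each individual summand is trivially bounded by the total. Applied to $f=\gamma^n\delta^{-m}$, this yields
\[|n\,\nu(\gamma)-m\,\nu(\delta)|\le \mathcal{H}(\gamma^n\delta^{-m})\le M\]
for every place $\nu$ of $L$.

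Next I would use multiplicative independence of $\gamma$ and $\delta$ --- understood modulo the constants $k^*$, as is standard in the function-field setting --- to conclude that the principal divisors $(\gamma)$ and $(\delta)$ are $\Q$-linearly independent. Since their supports lie in a common finite set $S_0$ of places, the vectors $(\nu(\gamma))_{\nu\in S_0}$ and $(\nu(\delta))_{\nu\in S_0}$ span a two-dimensional subspace of $\Q^{|S_0|}$, so I can select two places $\nu_1,\nu_2\in S_0$ for which the matrix
\[A=\begin{pmatrix}\nu_1(\gamma)&\nu_1(\delta)\\ \nu_2(\gamma)&\nu_2(\delta)\end{pmatrix}\]
is non-singular over $\Q$. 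Applying the bound above at $\nu=\nu_1$ and $\nu=\nu_2$ produces a vector $\mathbf{b}\in\Z^2$ with $\|\mathbf{b}\|_\infty\le M$ and $A\binom{n}{-m}=\mathbf{b}$. Inverting gives
\[\max(n,m)\le \|A^{-1}\|_\infty\cdot M\le \frac{2M\,\max_{i,j}|A_{ij}|}{|\det A|}=:C,\]
which is effectively computable from the valuations of $\gamma$ and $\delta$ alone.

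The main conceptual subtlety is reading \emph{multiplicatively independent} correctly: it must mean $\gamma^a\delta^b\in k^*\Rightarrow a=b=0$, not merely $\gamma^a\delta^b=1\Rightarrow a=b=0$, since otherwise a constant identity $\gamma^a\delta^b=c\in k^*$ would allow unbounded solutions of the form $(n,m)=(s-at,bt)$ with $\mathcal{H}(\gamma^n\delta^{-m})=|s|\,\mathcal{H}(\gamma)$ bounded. Under the stronger (and in this context, standard) reading, the argument is pure linear algebra and the genus $\mathfrak{g}$ in fact plays no role --- only the height formalism from Lemma~\ref{lem1} is used.
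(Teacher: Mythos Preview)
The paper does not actually supply a proof of this lemma: it is stated in the Preliminaries section and immediately followed by the sentence ``To prove Theorem~\ref{thm1}, we closely follow the proof of Heintze~\cite{heintze},'' with no intervening \texttt{proof} environment. So there is nothing to compare your argument against; the authors are treating the lemma as a known auxiliary result (presumably lifted from \cite{heintze} or the surrounding literature).

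That said, your argument is correct and complete. The chain $|\nu(f)|\le \mathcal{H}(f)$, the passage from multiplicative independence modulo $k^*$ to $\Q$-linear independence of the divisors $(\gamma)$ and $(\delta)$, and the extraction of a nonsingular $2\times 2$ minor to invert the system are all sound and effective. Your closing remark is also on point: the lemma as stated is only true if ``multiplicatively independent'' is read modulo $k^*$; your counterexample (take $\gamma=c\alpha$, $\delta=\alpha$ with $c\in k^*$ not a root of unity, so that $\gamma^a\delta^b=1$ forces $a=b=0$ while $\mathcal{H}(\gamma^n/\delta^n)=0$ for all $n$) shows this is a genuine necessity, not pedantry. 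In the paper's application (Case~III of the proof of Theorem~\ref{thm1}) the non-degeneracy hypothesis only gives $\alpha_i/\alpha_j\notin k^*$, which by itself does not exclude relations like $\alpha_i^2/\alpha_j^3\in k^*$, so the intended reading of ``multiplicatively independent'' in Theorem~\ref{thm1} must indeed be the one you describe. Your observation that $\mathfrak{g}$ plays no role in the bound is also correct; the stated dependence on $\mathfrak{g}$ is harmless but unnecessary.
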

To prove Theorem \ref{thm1}, we closely follow the proof of Heintze \cite{heintze}.
	\section{Proof of Theorem \ref{thm1}}
	
	Let $(U_n)_{n\geq 0}$ be as in the theorem. Putting \eqref{eq6} in \eqref{maineq}, we have
	\begin{equation}\label{eq10}
		\sum_{i=1}^d a_i\alpha_i^{n_1}+\cdots+\sum_{i=1}^da_i\alpha_i^{n_r} - u=0,
	\end{equation}
	for some $u\in \mathcal{O}_S^*$. Without loss of generality, we may assume that $\alpha_1, \ldots, \alpha_d$ and $a_1, \ldots, a_d$ are all $S$-units (by increasing the size of the set $S$).
	Let 
	\[C_1:=\binom{rd}{2} (|S|+\max(0, 2\mathfrak{g}-2)).\]  We will apply the Brownawell-Masser inequality in Proposition \ref{browthm} to the polynomial-exponential equation in unknowns $(n_1, \ldots, n_r)\in \N^r$. Therefore, we consider a minimal vanishing subsum of the left hand side of \eqref{eq10}, i.e., no proper sub-subsum of this subsum vanishes. Observe that this subsum contains at least two summand since $a_i\neq 0$ and $\alpha_i\neq 0$ for $1\leq i\leq d$. \\[1pt]
	\textbf{Case I:} \textit{The minimal vanishing subsum contains at least two terms with same exponent}. \\
	
	Consider the minimal vanishing subsum containing two summands $a_i\alpha_i^{n_t}$ and $a_j\alpha_j^{n_t}$ for $i\neq j$ and some $t$ such that $1\leq t\leq r$. Dividing the equation by $a_j\alpha_j^{n_t}$ and using the Proposition \ref{browthm} we get that
	\begin{equation}
		\mathcal{H}\left(\frac{a_i\alpha_i^{n_t}}{a_j\alpha_j^{n_t}}\right)\leq C_1.
	\end{equation} Then \[n_t\cdot\mathcal{H}\left(\frac{\alpha_i}{\alpha_j}\right)=\mathcal{H}\left(\left(\frac{\alpha_i}{\alpha_j}\right)^{n_t}\right)\leq C_1+\mathcal{H}\left(\frac{a_j}{a_i}\right):=C_2.\] Since $(U_n)_{n\geq 0}$ is non-degenerate we obtain that \[n_t\leq \frac{C_2}{\mathcal{H}\left(\frac{\alpha_i}{\alpha_j}\right)}\leq \frac{C_2}{\min_{1\leq \eta\neq \zeta\leq d} \mathcal{H}\left(\frac{\alpha_\eta}{\alpha_\zeta}\right)}:=C_3.\]     \\[1pt]
	\textbf{Case II:} \textit{The minimal vanishing subsum contains at least two terms with different exponents of which one is bounded as in the Case} I.\\

	Suppose the minimal vanishing subsum contains two summands $a_k\alpha_k^{n_t}$ and $a_l\alpha_l^{n_s}$  $( t\neq s)$ such that $n_t\leq C_3$. Dividing this equation by $a_k\alpha_k^{n_t}$ and using Proposition \ref{browthm}, we get \[\mathcal{H}\left(\frac{a_l\alpha_l^{n_s}}{a_k\alpha_k^{n_t}}\right)\leq C_4.\] Therefore, 
	\begin{align*}
		n_s\cdot\mathcal{H}(\alpha_l)=\mathcal{H}(\alpha_l^{n_s})&=\mathcal{H}\left(\frac{a_l\alpha_l^{n_s}}{a_k\alpha_k^{n_t}}\cdot \frac{a_k\alpha_k^{n_t}}{a_l} \right)\\
		&\leq \mathcal{H}\left(\frac{a_l\alpha_l^{n_s}}{a_k\alpha_k^{n_t}} \right)+\mathcal{H}(a_l)+\mathcal{H}(a_k)+n_t\cdot  \mathcal{H}(\alpha_k)\\
		&\leq C_4+ 2\cdot \max_{1\leq i\leq d}\mathcal{H}(a_i)+C_3\cdot  \max_{1\leq i\leq d}\mathcal{H}(\alpha_i):=C_5.
	\end{align*}
	Thus \[n_s\leq \frac{C_5}{\mathcal{H}(\alpha_l)}\leq \frac{C_5}{\min_{1\leq i\leq d}\mathcal{H}(\alpha_i)} :=C_6.\]
	\\[1pt]
	\textbf{Case III:} \textit{The minimal vanishing subsum contains  only some of the exponents which are distinct and all with different characteristic roots as well as not bounded so far.}\\

	Consider the minimal non-vanishing subsum  which contains the terms $a_{i_1}\alpha_{i_1}^{n_{j_1}}$ and $a_{i_k}\alpha_{i_k}^{n_{j_k}}$
	with $1\leq i_1<i_k\leq d$ and $1\leq j_1,j_k\leq r, (j_1\neq j_k)$. Dividing by $a_{i_k}\alpha_{i_k}^{n_{j_k}}$ and using the Proposition \ref{browthm}, we get
	
	\begin{equation}
		\mathcal{H} \left(\frac{a_{i_1}\alpha_{i_1}^{n_{j_1}}}{a_{i_k}\alpha_{i_k}^{n_{j_k}}}\right)\leq C_1. 
	\end{equation}
	Further by applying Lemma \ref{lem1}, we get 
	\begin{equation}\label{eq17}
		\mathcal{H} \left(\frac{\alpha_{i_1}^{n_{j_1}}}{\alpha_{i_k}^{n_{j_k}}}\right)\leq C_{7}. 
	\end{equation}
	Since no $\alpha_i$ lies in $k$, and that for all $i\neq j$ the characteristic roots $\alpha_i$ and $\alpha_j$ are multiplicatively independent, then by Lemma \ref{lem2} we infer that 
	\[\max(n_{j_1}, n_{j_{k}})\leq C_{8}.\] \\[1pt]
	\textbf{Case IV:} \textit{The minimal vanishing subsum contains only some of the exponents which are distinct and all with same characteristic roots as well as none of the exponent is bounded so far.}\\
	
	Suppose that there are $q_1$ number of minimal subsum of such minimal vanishing subsums and let $I = \{n_{j_1}, \ldots, n_{j_k}\}$ with $\# I \leq r-2$ be the set of exponents which is not present in these $q_1$ number of vanishing subsums. Let $n_{i_t}$ be the one of the exponent present in the $t$-th minimal vanishing subsums $(1\leq t\leq q_1)$. Consider one such minimal vanishing subsum that contains the terms $a_i\alpha_i^{n_{i_1}}$ and $a_i\alpha_i^{n_{j}} (j\neq i_1)$ and $n_j\not \in I$.
	Dividing  by $a_i\alpha_i^{n_{i_1}}$ and then applying Proposition \ref{browthm}, we get
	\[(n_{j}-n_{i_1})\cdot\mathcal{H}(\alpha_i) \leq C_9\] and this implies
	\[(n_{j}-n_{i_1}) \leq \frac{C_9}{\min_{1\leq \eta\leq d}\mathcal{H}\left(\alpha_{\eta}\right)}=: C_{10}.\]
	Thus we have $n_{j} = n_{i_1}+\zeta_{j, i_1}^{(1)}$, where $\zeta_{j,i_1}^{(1)}\leq C_{10}^{(1)}$.
	Similarly, from each of the $q_1-1$ minimal vanishing subsums we will get an equation of the form
	\begin{equation}\label{case4a}
		n_j=n_{i_t}+\zeta_{j,i_t}^{(1)}, \quad (1\leq t\leq q_1,\quad n_j\not \in I).
	\end{equation}
	For each of this finitely
	many possibilities we insert the representation \eqref{case4a} in \eqref{eq10} and after rearranging the terms we get
	\begin{equation}\label{eq10a}
		\sum_{n_j\in I} \sum_{\ell=1}^d a_{\ell}\alpha_{\ell}^{n_j}+\sum_{\ell=1}^{d} a_{\ell}^{(1)}\alpha_{\ell}^{n_{i_1}}+\cdots+\sum_{\ell=1}^{d} a_{\ell}^{(1)}\alpha_{\ell}^{n_{i_{q_1}}}-u=0.
	\end{equation}
	Now we enlarge the set $S$ to a set $S_1$ such that $a_{i}^{(1)}$ is an $S_1$-unit for all $i = 1,\ldots,d$. If we can able to find an upper bound for each $n_{i_t}\quad(1\leq t\leq q_1)$, then we are done. 
	
	The equation \eqref{eq10a} is one similar to the main equation \eqref{eq10} with less number of summands than \eqref{eq10}. Now we will consider different cases at which \eqref{eq10a} has a vanishing subsum (note that, here we will not consider the situation where the minimal vanishing subsums of \eqref{eq10a} which are of the form in Case I, II, and III, as we can deal those in a similar way). So from now onwards we will consider 
	\begin{equation}\label{eq10b}
\sum_{\ell=1}^{d} a_{\ell}^{(1)}\alpha_{\ell}^{n_{i_1}}+\cdots+\sum_{\ell=1}^{d} a_{\ell}^{(1)}\alpha_{\ell}^{n_{i_{q_1}}}-u=0.
	\end{equation}
	
	If it has $q_2$ number of  minimal vanishing subsum with different exponents that are not bounded so far with same characteristic roots occur at least twice, then we do the  same process which we done above and again get an equation of the form \eqref{eq10b} with less number of summands than the number of  summands in \eqref{eq10b}. Our aim is to rewrite the equation \eqref{eq10b} to an equation with exactly one exponent occur, say $n_{j_s}(1\leq j_s\leq r)$.  Since the number of terms in \eqref{eq10} is finite, after a finite number of step (say, $m$-th step), we will get equations of the form 
	\begin{equation}\label{eq10c}
		\sum_{l=1}^{d} a_{l}^{(m)}\alpha_{l}^{n_{j_1}}+\cdots+\sum_{l=1}^{d} a_{l}^{(m)}\alpha_l^{n_{j_{s}}}-u=0 \quad \text{and} \quad 	n_{j_i}=n_{j_s}+\zeta_{j_i, j_s}^{(m)} 
	\end{equation}
	for all $1\leq i\leq s-1$, where $\eta_{j_i, j_s}\zeta_{j_i, j_s}^{(m)} \leq C_{10}^{(m)}$ and it has no minimal vanishing subsum.
Now substituting $n_{j_i}=n_{j_s}+\zeta_{j_i, j_s}^{(m)} $ for $1\leq i\leq s-1$ in \eqref{eq10c} and with some further  simplifications, we get
	\begin{equation*}
		\sum_{i=1}^{d}a_{i}^{(m+1)}\alpha_{i}^{n_{j_s}}-u=0,\quad \text{for some}\quad 1\leq j_s\leq r
	\end{equation*}
	and \[a_i^{(m+1)}=a_i^{(m)}(\alpha_i^{\zeta_{j_1, j_s}^{(m)}}+\alpha_i^{\zeta_{j_2, j_s}^{(m)}}+\cdots+1).\]
	
	Now by \cite[Theorem 1]{heintze}, we get $n_{j_s}\leq C_{11}$ and this implies $n_{j_i} \leq C_{11}+C_{10}^{(m)}=:C_{12}$ for $1\leq i\leq s-1$. Hence, all the exponents in \eqref{eq10c} is bounded. Next we consider  the similar equation like \eqref{eq10c} at the $(m-1)$-th step which has $dt(t>s)$ number of summands with exponents $n_{i_l}$ such that $n_{i_l}=n_{j_k}+\zeta^{(m-1)}_{i_l,j_k}$, where $ i_l\in\{1,\ldots, r\} $ and $1\leq k\leq s$. Proceeding as above and since the number of exponents present in \eqref{eq10} is finite, we can bound all the exponents $n_i$ for all $i=1,\ldots, r$. This completes the proof of Theorem \ref{thm1}.
	\qed

	\section{Proof of Theorem \ref{thm2}}
We use the following result due to Fuchs and Peth\H{o} \cite{Fuchs2005} several times in the proof of Theorem \ref{thm2}.
\begin{lemma}\label{lemfuchs}
		Suppose  $U_n$ and $V_m$ are simple linear recurring sequences in the function field $L$ and $\alpha_i,\beta_j\in L^{*}$ for $1\leq i\leq d_1, 1\leq j\leq d_2$ with $d_{1}, d_2\geq 2$ are the simple roots of the respective characteristic polynomials. Assume that no $\alpha_i,\beta_i$ and no ratio $\alpha_i/\alpha_j$ or $\beta_i/\beta_j, i\neq j$ lies in $k^{*}$. Then there exists an effectively computable constant $C$ such that all the solutions $(n, m)\in \N^2$ with $U_n= V_m$, we have 
		\begin{equation*}
			\max(n, m) \leq C
		\end{equation*}
		unless there are integers $n_0, m_0, r, s$ with $rs\neq 0$ such that the pairs $(a_i\alpha_i^{n_0}, \alpha_i^r)$ coincide in some order with the pairs $(b_i\beta_i^{m_0}, \beta_i^s)$. In this case, we have infinitely many solutions.
\end{lemma}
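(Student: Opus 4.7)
The plan is to recast $U_n = V_m$ as a vanishing $S$-unit relation and invoke the Brownawell--Masser inequality (Proposition \ref{browthm}). After enlarging $S$ so that every $a_i$, $b_j$, $\alpha_i$, $\beta_j$ is an $S$-unit, the equation becomes
$$a_1\alpha_1^n + \cdots + a_{d_1}\alpha_{d_1}^n - b_1\beta_1^m - \cdots - b_{d_2}\beta_{d_2}^m = 0,$$
a sum of at most $d_1 + d_2$ $S$-units. Dividing by $-b_1\beta_1^m$ places this in the form $1 + u_1 + \cdots + u_{d_1+d_2-1} = 0$, and I would decompose the left-hand side into minimal vanishing subsums (vanishing subsums with no proper vanishing sub-subsum); for each such subsum Proposition \ref{browthm} produces an effective constant $C_0 = C_0(d_1+d_2, |S|, \mathfrak{g})$ bounding the projective height of every ratio of two of its summands.

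Next I would split into three cases according to the internal structure of a minimal subsum. \emph{Type A:} some minimal subsum contains two $\alpha$-summands $a_i\alpha_i^n$, $a_j\alpha_j^n$ with $i \ne j$. Their ratio satisfies $n\,\mathcal{H}(\alpha_i/\alpha_j) \le C_0 + \mathcal{H}(a_j/a_i)$; since $\alpha_i/\alpha_j \notin k^*$, part (e) of Lemma \ref{lem1} gives $\mathcal{H}(\alpha_i/\alpha_j) \ge 1$, bounding $n$ effectively, and then $m$ follows from comparing heights in $U_n = V_m$. \emph{Type B:} two $\beta$-summands appear together, which symmetrically bounds $m$ and then $n$. \emph{Type C:} no minimal subsum contains two $\alpha$- or two $\beta$-terms. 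Minimality then forces every subsum to consist of exactly one $\alpha$-term and one $\beta$-term, so $d_1 = d_2$ and there is a bijection $\sigma$ with $a_i\alpha_i^n = b_{\sigma(i)}\beta_{\sigma(i)}^m$ for each $i$. Proposition \ref{browthm} bounds $\mathcal{H}(\alpha_i^n/\beta_{\sigma(i)}^m)$, and if some pair $(\alpha_i, \beta_{\sigma(i)})$ is multiplicatively independent, Lemma \ref{lem2} bounds $(n,m)$.

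The main obstacle is the remaining sub-case of Type C in which every pair $(\alpha_i,\beta_{\sigma(i)})$ is multiplicatively dependent, so $(n,m)$ need not be bounded and one must extract the exceptional parametric family rather than a height bound. I would argue that since only finitely many types, bijections $\sigma$ and choices of division term are possible, an infinite solution set yields infinitely many solutions sharing a single bijection $\sigma$ and satisfying the identities $a_i\alpha_i^n = b_{\sigma(i)}\beta_{\sigma(i)}^m$. Fix one such solution $(n_0,m_0)$, take any other $(n,m)$, and divide the $i$-th identity to obtain $\alpha_i^{n-n_0} = \beta_{\sigma(i)}^{m-m_0}$ for every $i$ simultaneously. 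Setting $r := n-n_0$ and $s := m-m_0$ (the same pair for all $i$, since they arise from a single pair of solutions), the hypotheses $\beta_{\sigma(i)} \notin k^*$ and $\alpha_i \notin k^*$ force $r = 0 \Leftrightarrow s = 0$, while distinctness of the two solutions gives $rs \ne 0$. Thus the pairs $(a_i\alpha_i^{n_0}, \alpha_i^r)$ coincide under $\sigma$ with $(b_i\beta_i^{m_0}, \beta_i^s)$, matching the exceptional configuration stated in the lemma; conversely, this configuration produces the infinite family $(n_0 + rt,\, m_0 + st)$, $t \in \N$, of solutions to $U_n = V_m$, which completes the proof.
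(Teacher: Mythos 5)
The paper itself offers no proof of this lemma: it is quoted verbatim from Fuchs and Peth\H{o} and the ``proof'' is the citation \cite[Corollary 4.1]{Fuchs2005}. Your argument is in substance a reconstruction of that cited proof, and it is the same machinery the paper deploys for Theorems \ref{thm1} and \ref{thm2}: enlarge $S$, decompose $U_n-V_m=0$ into minimal vanishing subsums, apply Proposition \ref{browthm} to each, and split according to whether a subsum contains two terms with the same exponent (then $n\,\mathcal{H}(\alpha_i/\alpha_j)$ or $m\,\mathcal{H}(\beta_i/\beta_j)$ is bounded and the hypothesis $\alpha_i/\alpha_j,\beta_i/\beta_j\notin k^*$ gives a positive denominator) or pairs exactly one $\alpha$-term with one $\beta$-term (forcing $d_1=d_2$, a bijection $\sigma$, the identities $a_i\alpha_i^n=b_{\sigma(i)}\beta_{\sigma(i)}^m$, and then either Lemma \ref{lem2} or the extraction of $(n_0,m_0,r,s)$ from two distinct solutions). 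Your derivation of the exceptional configuration from two solutions, and the converse construction of the family $(n_0+rt,m_0+st)$, are correct.

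Two steps need tightening. First, in Type A you bound $n$ and then assert that $m$ ``follows from comparing heights in $U_n=V_m$.'' A bound on $\mathcal{H}(V_m)$ does not by itself bound $m$: the places contributing to the heights of the individual $b_j\beta_j^m$ can coincide and partially cancel, so a lower bound for $\mathcal{H}(V_m)$ linear in $m$ is not automatic. The clean fix, which is exactly what the paper does in the corresponding step of the proof of Theorem \ref{thm2}, is to examine the minimal vanishing subsum containing some $b_j\beta_j^m$: its partner is either another $\beta$-term, giving $m\,\mathcal{H}(\beta_j/\beta_{j'})\le C$, or an $\alpha$-term, giving $m\,\mathcal{H}(\beta_j)\le C+\mathcal{H}(a_i)+\mathcal{H}(b_j)+n\,\mathcal{H}(\alpha_i)$, which is bounded because $n$ already is. Second, in the sub-case of Type C where every pair $(\alpha_i,\beta_{\sigma(i)})$ is multiplicatively dependent, your argument only shows that two distinct solutions sharing a bijection force the exceptional configuration; when that configuration fails you are left with at most one solution per bijection but no effective bound on it, whereas the lemma claims an effectively computable $C$. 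Closing this requires using the explicit dependence relations to confine $(n,m)$ to finitely many effectively determined arithmetic progressions and then resolving the resulting equation among constants; this is the part of the Fuchs--Peth\H{o} argument your sketch elides. Neither point changes the architecture, but both must be supplied for the statement as claimed.
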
	
	
	\begin{proof}
		See \cite[Corollary 4.1]{Fuchs2005}.
	\end{proof}
We also need the following two results to prove Theorem \ref{thm2}. 
\begin{proposition}\label{thm2prop1}
Assume all the hypothesis in Theorem \ref{thm2} and $d_1\geq d_2+d_3$. Suppose that the minimal vanishing subsum contains none of the two summands with the  same exponent. Then \eqref{eq22} has finitely many solutions unless there exist integers $n_0,$ $\ell_0,p_0,q_0,$ $n_1,m_1,p_{1}$ and $q_{1}$ satisfying \eqref{thm2eq1}.
\end{proposition}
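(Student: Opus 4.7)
The plan is to apply Brownawell--Masser's inequality (Proposition \ref{browthm}) to the polynomial--exponential equation
\begin{equation*}
\sum_{i=1}^{d_1} a_i \alpha_i^n + \sum_{j=1}^{d_2} b_j \beta_j^m + \sum_{k=1}^{d_3} c_k \gamma_k^\ell = 0,
\end{equation*}
after enlarging $S$ so that every $a_i, b_j, c_k, \alpha_i, \beta_j, \gamma_k$ is an $S$-unit. Because Brownawell--Masser requires no proper vanishing subsum, the entire analysis is driven by the structure of minimal vanishing subsums (MVS). By hypothesis no MVS contains two summands with the same exponent variable, while \eqref{eq11} precludes the vanishing of any three-term subsum with one summand from each of $U_n, V_m, W_\ell$. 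Since no single summand is zero, every MVS must consist of exactly two summands drawn from two different variable groups.

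Next I would perform a combinatorial count: let $p, q, r$ denote the numbers of MVS of types $(n,m), (n,\ell), (m,\ell)$, respectively. Matching summand counts yields $p+q=d_1$, $p+r=d_2$, $q+r=d_3$, and hence $r=(d_2+d_3-d_1)/2$. The hypothesis $d_1 \ge d_2+d_3$ forces $r=0$, $d_1=d_2+d_3$, $p=d_2$ and $q=d_3$, so there are no $(m,\ell)$-pairings and every $m$-term (respectively $\ell$-term) is paired with an $n$-term. After relabelling the $\alpha_i$ if necessary, this partitions the summands of $U_n$ into $U_n^{(1)}$ (those paired with $W_\ell$, of size $t_1 = d_3$) and $U_n^{(2)}$ (those paired with $V_m$, of size $d_2$), and forces $V^{(1)}=V$, $W^{(1)}=W$ in the notation of \eqref{thm2eq1}.

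For each two-term MVS of type $(n,m)$, of the form $a_i \alpha_i^n + b_j \beta_j^m = 0$, I would bound $\mathcal{H}(\alpha_i^n/\beta_j^m) = \mathcal{H}(-b_j/a_i)$ and invoke Lemma \ref{lem2}: if $\alpha_i, \beta_j$ are multiplicatively independent modulo $k^*$ then $\max(n,m)$ is effectively bounded; otherwise a relation $\alpha_i^{p_1} = c\,\beta_j^{q_1}$ with $c \in k^*$ and coprime $p_1, q_1$ forces $(n,m)$ to lie on a single arithmetic progression $(n,m) = (n_1, m_1) + t(p_1, q_1)$. An identical analysis applies to each $(n,\ell)$-MVS, yielding either a bound or an arithmetic progression $(n,\ell) = (n_0, \ell_0) + s(p_0, q_0)$.

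In the case of infinitely many solutions, every MVS must fall in the parametrized alternative. Because the $d_2$ pairings in the $(n,m)$-group share the same underlying $(n,m)$, their slopes and offsets must be compatible, and collecting them produces an identity $U^{(2)}_{n_1+p_1 m} + V_{m_1+q_1 m} \equiv 0$. The analogous collection of $(n,\ell)$-pairings gives $U^{(1)}_{n_0+p_0 \ell} + W_{\ell_0+q_0 \ell} \equiv 0$, and summing these two identities yields precisely \eqref{thm2eq1}. The hard part I expect is verifying that several pairings within a single group are forced to share a common arithmetic progression rather than producing mutually inconsistent parametrizations; this reduces to showing that the intersection of the solution sets of the individual two-term equations is either finite or itself a single arithmetic progression, a fact that rests on the rank-one structure of multiplicatively dependent pairs of $S$-units in $L^*/k^*$ together with the observation that two non-proportional linear relations in $(n,m)$ share at most one integer solution.
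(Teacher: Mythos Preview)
Your proposal follows essentially the same route as the paper: both force $d_1=d_2+d_3$ by counting two-term pairings (the paper states this count more implicitly), analyse each resulting equation $a_i\alpha_i^n=-c_j\gamma_j^\ell$ or $a_s\alpha_s^n=-b_t\beta_t^m$ to obtain either a bound or a linear relation in the exponents, and then assemble these relations into \eqref{thm2eq1}. The only differences are cosmetic: where you invoke Lemma~\ref{lem2} on $\mathcal{H}(\alpha_i^n/\gamma_j^\ell)$, the paper reads off $p_0=\nu(\gamma_j)/\nu(\alpha_i)$ and $q_0=(\nu(c_j)-\nu(a_i))/\nu(\alpha_i)$ directly from a valuation $\nu$ at a common zero or pole, and the paper finishes with an application of Lemma~\ref{lemfuchs} to the reduced equation in $(m,\ell)$, a step slightly beyond what the proposition itself demands. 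The compatibility issue you single out as the ``hard part''---that all pairings within one group must yield the \emph{same} linear relation, or else two non-proportional constraints pin down a unique $(n,\ell)$---is indeed glossed over in the paper's write-up, and your sketch of how to resolve it is correct.
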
	

\begin{proof}
Consider minimal vanishing subsum of \eqref{eq22}. Without loss of generality we can assume that $d_2\geq d_3$. Given that $d_1\geq d_2+d_3$. If $d_1>d_2+d_3$, then there will be no terms with exponent $m$ or $\ell$ to pair with the $d_1-(d_2+d_3)$ terms with exponent $n$. So, we must have $d_1=d_2+d_3$ and rewriting the indices if necessary we get
	\begin{align*}\label{eq22ali}
		\begin{split}
			&a_1\alpha_1^{n} + c_1\gamma_1^\ell = 0, \ldots, a_{d_3}\alpha_{d_3}^{n} +c_{d_3}\gamma_{d_3}^{\ell}   = 0,\\
			&a_{d_3+1}\alpha_{d_3+1}^{n} + b_{1}\beta_{1}^{m} = 0,\ldots, a_{d_1}\alpha_{d_1}^{n} + b_{d_2}\beta_{d_2}^{m} = 0.\\
		\end{split}
	\end{align*}
	Thus, we have equations of the form
	\begin{equation}\label{case1a}
		a_i\alpha_i^n=-c_j\gamma_j^\ell \quad(1\leq i,j\leq d_3)
	\end{equation} 
	and
	\begin{equation}\label{case1b}
	a_s\alpha_s^n	=-b_t\beta_t^m \quad (d_3+1\leq s\leq d_1, 1\leq t\leq d_2)
	\end{equation}
	The equation \eqref{case1a}  (resp. \eqref{case1b}) can only hold for one pair $(n_0,\ell_0)\in \N^{2}$ (resp. $(n_1,m_1)\in \N^{2}$) unless the set of zeros and poles of $\alpha_i$ and $\gamma_j$ (resp. $\alpha_s$ and $\beta_t$) coincide.  Let $\nu$ and $\mu$ be one of these zeros and poles which coincide in the above two equations respectively. Then \eqref{case1a} implies that 
	\begin{equation}\label{case1c}
		n=p_0\ell+q_0 
	\end{equation} for the integers $	p_0=\frac{\nu(\gamma_j)}{\nu(\alpha_i)}, \quad q_0=\frac{\nu(c_j)-\nu(a_i)}{\nu(\alpha_i)}.$ Also, \eqref{case1b} implies that
	\begin{equation}\label{case1d}
		n=p_{1}m+q_{1}	
	\end{equation} for integers $
	p_{1}=\frac{\mu(\beta_t)}{\mu(\alpha_s)}, \quad q_{1}=\frac{\mu(b_t)-\mu(a_s)}{\mu(\alpha_s)}.$
	Putting \eqref{case1c} in \eqref{case1a} and \eqref{case1d} in \eqref{case1b} we get
	\[\alpha_i^{q_0}\frac{a_i}{c_j}=\left(\frac{\gamma_j}{\alpha_i^{p_0}}\right)^{\ell}\quad \text{and}\quad \alpha_i^{q_1}\frac{a_s}{b_t}=\left(\frac{\beta_t}{\alpha_s^{p_{1}}}\right)^{m}.\]
	This equation holds for one $\ell=\ell_0$ (resp. $m=m_0$) only, unless $\gamma_j/\alpha_i^{p_0}$ (resp. $\beta_t/\alpha_s^{p_{1}}$) is a root of unity and $a_i/c_j$ (resp. $a_s/b_t$) is also a root of unity. Then the equation holds for all $\ell$ (and $m$ respectively) in an arithmetic progression.
	
	Thus, we conclude that the equations hold just for one $(n_0,\ell_0)$ and $(n_1,m_1)$ in $\N^{2}$, unless we have $d_1=d_2+d_3$ and there exists integers $n_0,\ell_0,p_0,q_0,n_1,m_1,p_1,q_1$ and bijections $\rho$ from $\{1,\ldots, d_3\}$ to itself and $\delta$ from $\{d_3+1,\ldots,d_1\}$ to $\{1,\ldots,d_2\}$ such that \eqref{case1a} and \eqref{case1b} holds, $$\frac{\alpha_i^{p_0}}{\gamma_{\rho(i)}^{q_0}}=u_i,\quad \frac{\alpha_j^{p_{1}}}{\beta_{\delta(j)}^{q_{1}}}=v_j, $$ where $u_i$ and $v_j$ are root of unity. Therefore,
	\begin{equation}\label{eq28}
		\sum_{i=1}^{d_3}\left( a_i\alpha_i^{n_0+p_0\ell}+c_{\rho(i)}\gamma_{\rho(i)}^{\ell_0+q_0\ell}\right)+\sum_{j=d_3+1}^{d_1}\left( a_j\alpha_j^{n_1+p_{1}m}+b_{\delta(j)}\beta_{\delta(j)}^{m_1+q_{1}m}\right)=0,
	\end{equation}
i.e.,
	\begin{equation*}
		\sum_{i=1}^{d_3}\left( a_i\alpha_i^{n_0}u_i^\ell+c_{\rho(i)}\gamma_{\rho(i)}^{\ell_0}\right)\gamma_{\rho(i)}^{q_0\ell}+\sum_{j=d_3+1}^{d_1}\left( a_j\alpha_j^{n_1}v_j^{m}+b_{\delta(j)}\beta_{\delta(j)}^{m_1}\right)\beta_{\delta(j)}^{q_{1}m}=0
	\end{equation*}
	The above equation has infinitely many solutions if all the coefficient vanishes, that is, $-a_i\alpha_i^{n_0}/c_j\gamma_j^{\ell_0}$ and $-a_s\alpha_s^{n_1}/b_t\beta_t^{m_1}$ are root of unity. Otherwise by setting		$$	A_i:= a_i\alpha_i^{n_0}u_i^\ell+c_{\rho(i)}\gamma_{\rho(i)}^{\ell_0}, \quad\gamma_i^{'}:=\gamma_{\rho(i)}^{q_0} ,\quad 1\leq i\leq d_3$$ and  $$B_j:=a_j\alpha_j^{n_1}v_j^{m}+b_{\delta(j)}\beta_{\delta(j)}^{m_1},\quad \beta_j^{'}:=\beta_{\delta(j)}^{q_{1}},\quad d_3+1\leq j\leq d_1,$$ we get
		\begin{equation}
			\sum_{i=1}^{d_3}A_i\gamma_i^{'^{\ell}}+\sum_{j=d_3+1}^{d_1}B_j\beta_j^{'m}=0.
		\end{equation}
		By Lemma \ref{lemfuchs}, there exists an effectively computable constant $C$ such that $\max\{m,\ell\}\leq C$ unless there are integers $m_0,\ell_0,p_0,q_0$ such that the pairs $(A_i\gamma_i^{'\ell_0}, \gamma_i^{'p_0})$ coincide in some order with the pairs $(-B_j\beta_j^{'m_0},\beta_j^{'q_0}).$ This completes the proof.
\end{proof}

\begin{proposition}\label{thm2prop2}
Assume all the hypothesis in Theorem \ref{thm2} and $d_1< d_2+d_3$. Suppose that the minimal vanishing subsum contains none of the two summands with the  same exponent. Then \eqref{eq22} has finitely many solutions unless there exist integers  $n_0,$ $\ell_0,p_0,q_0,$ $n_1,m_1,p_{1},q_{1},m_2,$ $\ell_2,p_{2},$ and $q_{2}$ satisfying \eqref{thm2eq2}.
\end{proposition}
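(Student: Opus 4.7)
The plan is to follow the strategy of Proposition \ref{thm2prop1}, adapted to the case $d_1 < d_2 + d_3$. The key new phenomenon is that the pigeonhole argument which forced every $\alpha^n$-term to pair with either a $\beta^m$- or a $\gamma^\ell$-term in the previous proposition now leaves leftover $\beta$- and $\gamma$-terms, which must pair with each other, producing $(\beta^m, \gamma^\ell)$-pairings alongside the $(\alpha^n, \beta^m)$- and $(\alpha^n, \gamma^\ell)$-pairings. These three pairing types are precisely what is encoded in \eqref{thm2eq2}.

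First, we take a minimal vanishing subsum of the left-hand side of \eqref{eq22}. Since no two of its summands share an exponent, it has at most three terms, and a three-term subsum of the shape $a_i\alpha_i^n + b_j\beta_j^m + c_k\gamma_k^\ell = 0$ is ruled out by hypothesis \eqref{eq11}. Hence every minimal vanishing subsum has exactly two terms and is of one of the three shapes $a_i\alpha_i^n + b_j\beta_j^m = 0$, $a_i\alpha_i^n + c_k\gamma_k^\ell = 0$, or $b_j\beta_j^m + c_k\gamma_k^\ell = 0$. Writing $a, b, c$ for the numbers of subsums of each of the three types, we have $a + b = d_1$, $a + c = d_2$, $b + c = d_3$, so $c = (d_2 + d_3 - d_1)/2 > 0$ by assumption, and at least one $(\beta, \gamma)$-pairing must occur. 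The subsets of indices $I_1, I_2 \subseteq \{1, \ldots, d_1\}$, $J_1, J_2 \subseteq \{1, \ldots, d_2\}$, $K_1, K_2 \subseteq \{1, \ldots, d_3\}$ recording which characteristic roots enter which type of pairing produce a decomposition $U_n = U_n^{(1)} + U_n^{(2)}$, $V_m = V_m^{(1)} + V_m^{(2)}$, $W_\ell = W_\ell^{(1)} + W_\ell^{(2)}$ matching the notation of the theorem, with $t_1 = |I_1|$, $t_2 = |J_1|$, $t_3 = |K_1|$.

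Second, each two-term minimal subsum is handled exactly as in the proof of Proposition \ref{thm2prop1}. For instance, $a_i\alpha_i^n + b_j\beta_j^m = 0$ either has at most one solution $(n_0, m_0) \in \N^2$, or forces the zeros and poles of $\alpha_i$ and $\beta_j$ to coincide, yielding an integer linear relation $n = n_0 + p_0 m$ with $\alpha_i^{p_0}/\beta_j^{q_0}$ and $a_i/b_j$ roots of unity. The analogous analyses of the $(\alpha, \gamma)$- and $(\beta, \gamma)$-pairings produce relations $n = n_1 + p_1 \ell$ and $m = m_2 + p_2 \ell$, respectively. Summing the contributions of the three pairing blocks and substituting these relations reproduces exactly \eqref{thm2eq2}, and the family is infinite precisely when every pairing admits the required root-of-unity structure. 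When any pairing fails this structure, Lemma \ref{lemfuchs} applied to the offending block bounds the two exponents involved; in particular the $(\beta, \gamma)$-block bounds $m$ and $\ell$, and either of the $(\alpha, \beta)$- or $(\alpha, \gamma)$-blocks then bounds $n$, yielding an effectively computable $C$ with $\max(n, m, \ell) \leq C$.

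The main obstacle will be the combinatorial bookkeeping: verifying that the partition of the characteristic roots into $I_1 \sqcup I_2$, $J_1 \sqcup J_2$, $K_1 \sqcup K_2$ is consistent across all minimal vanishing subsums (each root belongs to a single pairing type), and then gluing the twelve parameters $n_0, \ell_0, p_0, q_0, n_1, m_1, p_1, q_1, m_2, \ell_2, p_2, q_2$ into a single instance of \eqref{thm2eq2}. A subsidiary technical point is that when the triangle-type inequalities $d_2 \leq d_1 + d_3$ or $d_3 \leq d_1 + d_2$ fail, or when the parity of $d_2 + d_3 - d_1$ obstructs integrality of $a, b, c$, no valid two-term pairing decomposition exists, so some minimal vanishing subsum must contain a repeated exponent, which via Cases I--II in the proof of Theorem \ref{thm1} supplies an effective bound directly.
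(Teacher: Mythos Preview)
Your proposal is correct and follows essentially the same route as the paper: both decompose \eqref{eq22a} into two-term minimal vanishing subsums of the three types $(\alpha,\beta)$, $(\alpha,\gamma)$, $(\beta,\gamma)$, analyse each pairing via the zeros/poles argument to extract the linear relations $n=p_0m+q_0$, $n=p_1\ell+q_1$, $m=p_2\ell+q_2$ together with the root-of-unity conditions, and then assemble these into the identity \eqref{thm2eq2}. The only cosmetic difference is the endgame: the paper substitutes all three relations back into \eqref{eq22a} to obtain a single two-exponent equation $\sum_i A_i\beta_i'^{\,m}+\sum_j B_j\gamma_j'^{\,\ell}=0$ and invokes Lemma~\ref{lemfuchs} once on that combined equation, whereas you propose treating the blocks separately; both work, but note that Lemma~\ref{lemfuchs} requires order at least two, so a block consisting of a single pairing must be handled by the direct zeros/poles analysis rather than the lemma.
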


\begin{proof}
Since $d_1<d_2+d_3$, we can write the subsums as follows:
			\begin{align*}
			\begin{split}
				&a_1\alpha_1^n+b_1\beta
				_1^m=0, \ldots, a_t\alpha_t^n+b_t\beta_t^m=0,\\
				&a_{t+1}\alpha_{t+1}^n+c_1\gamma_1^\ell=0,\ldots, a_{d_1}\alpha_{d_1}^n+c_s\gamma_s^{\ell}=0,\\
			&b_{t+1}\beta_{t+1}^m+c_{s+1}\gamma_{s+1}^\ell=0,\ldots, b_{d_2}\beta_{d_2}^m+c_{d_3}\gamma_{d_3}^{\ell}=0.\\
			\end{split}
		\end{align*}
		Thus, we have equations of the form
		\begin{equation}\label{case2a}
			a_i\alpha_i^{n}=-b_j\beta_j^m, \quad(1\leq i,j\leq t)
		\end{equation}
		\begin{equation}\label{case2b}
			a_x\alpha_x^n=-c_y\gamma_y^{\ell},\quad (t+1\leq x\leq d_1; 1\leq y\leq s),
		\end{equation}
		\begin{equation}\label{case2c}
			b_u\beta_u^m=-c_v\gamma_v^\ell, \quad (t+1\leq u\leq d_2; s+1\leq v\leq d_3),
		\end{equation}
		The equation \eqref{case2a} can only hold for one pair $(n_0,m_0)\in \N^{2}$ unless the set of zeros and poles of $\alpha_i$ and $\beta_j$ coincide. Similarly  the equation \eqref{case2b} and \eqref{case2c} can only hold for one pair $(n_1,\ell_1)\in \N^{2}$ and $(m_2,\ell_2)\in \N^{2}$  unless the set of zeros and poles of $\alpha_x$ and $\gamma_y$ coincide and the set of zeros and poles of $\beta_u$ and $\gamma_v$  coincide respectively. Let $\nu, \mu$ and $\tau$ be one of these zeros and poles which coincide in the above three equations respectively. Then \eqref{case2a}, \eqref{case2b} and   \eqref{case2c}  implies that 
		\begin{equation}\label{case2d}
		n=p_0m+q_0, \quad n= p_1\ell+q_1\quad \mbox{and}\quad 	m=p_2\ell+q_2
		\end{equation}
		for the integers 
		\begin{align}
		\begin{split}\label{case2e}
		&p_0=\frac{\nu(\beta_j)}{\nu(\alpha_i)}, q_0=\frac{\nu(b_j)-\nu(a_i)}{\nu(\alpha_i)}, p_1=\frac{\mu(\gamma_y)}{\mu(\alpha_x)},\\
		& q_1=\frac{\mu(c_y)-\mu(a_x)}{\mu(\alpha_x)}, p_2=\frac{\tau(\gamma_v)}{\tau(\beta_u)}, q_2=\frac{\tau(c_v)-\tau(b_u)}{\tau(\beta_u)}.
		\end{split}
		\end{align} Thus from \eqref{case2d}, \eqref{case2a}, \eqref{case2b} and \eqref{case2c} we get
		\begin{equation}
			\alpha_i^{q_0}\frac{a_i}{b_j}=\left(\frac{\beta_j}{\alpha_i^{p_0}}\right)^{m}, \quad 	\alpha_x^{q_1}\frac{a_x}{c_y}=\left(\frac{\gamma_y}{\alpha_x^{p_1}}\right)^{\ell}, \quad 	\beta_{u}^{q_2}\frac{b_u}{c_v}=\left(\frac{\gamma_v}{\beta_u^{p_2}}\right)^{m}.
		\end{equation}
		The above equations can hold for one $m=m_0, \ell=\ell_1$ and $m=m_2$ respectively only, unless $a_i/b_j$, $\beta_j/\alpha_i^{p_0}$, $ a_x/c_y$, $\gamma_y/\alpha_x^{p_1}$, $b_u/c_v$ and $\gamma_v/\beta_u^{p_2}$ are root of unity. Then the equations hold for all $m,\ell$ in arithmetic progression.
		
		Thus, we conclude that the equations hold for one $(n_0,m_0), (n_1,\ell_1)$ and $(m_2,\ell_2)$ unless we have $d_1<d_2+d_3$ and there exists integers $n_0,m_0,p_0,q_0,n_1,\ell_1,p_1,q_1,m_2,\ell_2,p_2,q_2$ and bijection  $\rho$ from $\{1,\ldots,t\}$ to itself, $ \delta$ from $ \{t+1,\ldots, d_1\}$ to $\{1,\ldots, s\}$ and $\sigma$ from $\{d_1+1,\ldots, d_2+d_3-d_1 \}$ to $\{s+1,\ldots, d_3\}$ such that \eqref{case2a}, \eqref{case2b} and \eqref{case2c} hold respectively, $$\frac{\alpha_i^{p_0}}{ \beta_j^{q_0}}=\omega_i,\quad \frac{\alpha_x^{p_{1}}}{\gamma_y^{q_{1}}}=\lambda_x,\quad \frac{\beta_u^{p_{2}}}{\gamma_v^{q_{2}}}= \epsilon_u,$$ where $\omega_i,\lambda_x, \epsilon_u$ are roots of unity. So,
		\begin{align}\label{eq37}
			\begin{split}
			\sum_{i=1}^{t}\left(a_i\alpha_i^{n_0+p_0m}+b_{\rho(i)}\beta_{\rho(i)}^{m_0+q_0m}\right)&+\sum_{x=t+1}^{d_1}\left(a_x\alpha_x^{n_1+p_1\ell}+c_{\delta(x)}\gamma_{\delta(x)}^{\ell_1+q_1\ell}\right)\\
			&+\sum_{u=d_1+1}^{\frac{d_2+d_3-d_1}{2}}\left(b_u\beta_u^{m_2+p_2\ell}+c_{\sigma(u)}\gamma_{\sigma(u)}^{\ell_2+q_2\ell}\right)=0,
			\end{split}
		\end{align}
		that is,
		\begin{align*}
			\sum_{i=1}^{t}\left(a_i\alpha_i^{n_0}\omega_i^{m}+b_{\rho(i)}\beta_{\rho(i)}^{m_0}\right)\beta_{\rho(i)}^{q_0m}&+\sum_{x=t+1}^{d_1}\left(a_x\alpha_x^{n_1}\lambda_x^\ell+c_{\delta(x)}\gamma_{\delta(x)}^{\ell_1}\right)\gamma_{\delta(x)}^{q_{1}\ell}\\
			&+\sum_{u=d_1+1}^{\frac{d_2+d_3-d_1}{2}}\left(b_u\beta_u^{m_2}\epsilon_u^{\ell}+c_{\sigma(u)}\gamma_{\sigma(u)}^{\ell_2}\right)\gamma_{\sigma(u)}^{q_{2}\ell}=0.\\
		\end{align*}
	The above equation has infinitely many solution if all the coefficient vanishes, i.e., $-a_i\alpha_i^{n_0}/b_j\beta_j^{m_0}$, $-a_x\alpha_x^{n_1}/c_y\gamma_y^{\ell_1}$ and $-b_u\beta_u^{m_2}/c_v\gamma_v^{\ell_2}$ are root of unity.  Otherwise by setting $$A_i=a_i\alpha_i^{n_0}\omega_i^{m}+b_{\rho(i)}\beta_{\rho(i)}^{m_0},\quad \beta_i^{'}=\beta_{\rho(i)}^{q_0},\quad 1\leq i\leq t$$ and $$B_j=\left\{ \begin{array}{cl}
		a_x\alpha_x^{n_1}\lambda_x^\ell+c_{\delta(x)}\gamma_{\delta(x)}^{\ell_1},& t+1\leq x\leq d_1\\
		&\\
		b_u\beta_u^{m_2}\epsilon_u^{\ell}+c_{\sigma(u)}\gamma_{\sigma(u)}^{\ell_2}, & d_1+1\leq u\leq \frac{d_2+d_3-d_1}{2},
	\end{array}\right.$$ \\
	$$\gamma_j^{'}=\left\{ \begin{array}{cl}
	\gamma_{\delta(x)}^{q_{1}}, & t+1\leq x \leq d_1\\
	&\\
	\gamma_{\sigma(u)}^{q_{2}}, & d_1+1\leq u \leq \frac{d_2+d_3-d_1}{2},
	\end{array}\right. $$ we get
	\begin{equation*}
		\sum_{i=1}^{t}A_i\beta_i^{'m}+\sum_{j=t+1}^{\frac{d_2+d_3-d_1}{2}}B_j\gamma_j^{'\ell}=0.
	\end{equation*}
	By Lemma \ref{lemfuchs}, there exists an effectively computable constant $C$ such that $\max\{m,\ell\}\leq C$ unless there are integers $m_0,\ell_0,p_0,q_0$ such that the pairs $(A_i\beta_i^{'m_0},\beta_i^{'p_0})$ coincide in some order with the pairs $(-B_j\gamma_j^{'\ell_0},\gamma_j^{'q_0}).$ 
\end{proof}

\subsection{Proof of Theorem \ref{thm2}:}	Suppose $U_n,V_m$ and $ W_\ell$  are simple linear recurring sequences as in the theorem. Putting \eqref{eq21} in \eqref{eq22}, we infer that
	\begin{equation}\label{eq22a}
		a_1\alpha_1^n+\cdots+ a_{d_1}\alpha_{d_1}^n+b_1\beta_1^m+\cdots+b_{d_2}\beta_{d_2}^m+c_1\gamma_1^\ell+\cdots+c_{d_3}\gamma_{d_3}^\ell=0.
	\end{equation}
	Let $S$ be a finite set of valuations such that all $a_i,b_j,c_k, \alpha_i,\beta_j,$ and $\gamma_k$  are $S$-units for $1\leq i\leq d_1,1\leq j\leq d_2, 1\leq k\leq d_3$. Let 
	\[C_{13}:=\binom{d_1+d_2+d_3}{2} (|S|+\max(0, 2\mathfrak{g}-2)).\] 
	We will apply the Brownawell-Masser inequality in Proposition \ref{browthm} to the polynomial-exponential equation \eqref{eq22a} in unknowns $(n,m,\ell)\in \N^3$. Therefore, we consider a minimal vanishing subsum of the left hand side of \eqref{eq22a}, i.e., no proper sub-subsum of this subsum vanishes. Assuming this set of solutions is non-empty and since each summands in \eqref{eq22a} is nonzero, we can conclude that each minimal vanishing subsum contains at least two elements. 
	
	We consider a minimal vanishing subsum of the left hand side of \eqref{eq22a} containing two distinct elements say, $a_i\alpha_i^n,a_j\alpha_j^n(i\neq j)$.
	After dividing by $a_j\alpha_j^n$ and then applying Proposition \ref{browthm} to this subsum we get
	\[\mathcal{H} \left(\frac{a_i\alpha_{i}^{n}}{a_{j}\alpha_{j}^{n}}\right) \leq C_{13}.\]
	Then
	\begin{align*}
		n\cdot \mathcal{H}\left(\frac{\alpha_{i}}{\alpha_{j}}\right) & = \mathcal{H}\left(\frac{\alpha_i^{n}}{\alpha_{j}^{n}}\right) = \mathcal{H}\left(\frac{a_i\alpha_{i}^{n}}{a_{j}\alpha_{j}^{n}}\cdot \frac{a_j}{a_i}\right) \\
		&\leq\mathcal{H}\left(\frac{a_i\alpha_{i}^{n}}{a_{j}\alpha_{j}^{n}}\right) + \mathcal{H}\left(\frac{a_{j}}{a_{i}}\right)\\
		& \leq C_{13}+ \max_{1\leq {\xi}\neq \eta\leq d_1}\mathcal{H}\left(\frac{a_{\eta}}{a_{\xi}}\right)=:C_{14}.
	\end{align*}
	Since the sequence $(U_n)_{n\geq 0}$ is non-degenerate, we get
	\[n\leq \frac{C_{14}}{ \mathcal{H}\left(\frac{\alpha_{i}}{\alpha_{j}}\right) } \leq \frac{C_{14}}{\min_{\eta\neq \xi} \mathcal{H}\left(\frac{\alpha_{\xi}}{\alpha_{\eta}}\right) }=:C_{15}.\]
	It may happen that this minimal vanishing subsum contain at least one element of the form $b_k\beta_k^m$ or $c_k\gamma_k^\ell$ for any $k$. Then 
	\begin{align*}
		m\cdot \mathcal{H}&\left(\beta_{k}\right)  = \mathcal{H}\left(\beta_k^m\right) \leq \mathcal{H}(b_k\beta_k^m)+ \mathcal{H}(b_k)\\
		& \leq \mathcal{H}\left(\frac{b_k\beta_k^m}{a_j\alpha_j^n} \cdot  a_i\alpha_i^n\right) + \mathcal{H}\left(b_k\right)\\
		& \leq C_{13}+ 2\cdot \max_{1\leq {j}\leq d_1; 1\leq k\leq d_2}\left\{\mathcal{H}\left(a_j\right),\mathcal{H}\left(b_k\right)\right\}+n\cdot \mathcal{H}(\alpha_{i})\\
		& \leq C_{13}+ 2\cdot \max_{1\leq {j}\leq d_1; 1\leq k\leq d_2}\left\{\mathcal{H}\left(a_j\right),\mathcal{H}\left(b_k\right)\right\}+ C_{15}\cdot \max_{1\leq {\eta}\leq d_1}\mathcal{H}\left(\alpha_{\eta}\right)=:C_{16}.
	\end{align*}Then
	\[m\leq \frac{C_{16}}{ \mathcal{H}\left(\beta_k\right)} \leq \frac{C_{16}}{\min_{1\leq {\eta}\leq d_2}\mathcal{H}\left(\beta_{\eta}\right)}=:C_{17}.\]
	Similarly we can bound $\ell$ also.  We may now assume  that the minimal vanishing subsum contains none of the two summands with the  same exponent. i.e., each minimal non-vanishing subsum contains precisely two elements or three elements. Since we assumed the given recurrence sequences satisfy \eqref{eq11}, we consider only the case when each minimal vanishing subsum containing precisely two terms.
	Here we observe that $d_1+d_2+d_3$ is even and there will be at least $\max\{d_1,d_2,d_3\}$ number of minimal vanishing subsums of \eqref{eq22a}. Without loss of generality we assume that $d_1\geq d_2\geq d_3$. Now we consider two cases:
	
	 If $d_1\geq d_2+d_3$, then by Proposition \ref{thm2prop1},  \eqref{eq22} has finitely many solutions in $(n, m, \ell)$  unless there exist integers $n_0, \ell_0, p_0, q_0, n_1, m_1, p_{1}$ and $q_{1}$ satisfying \eqref{thm2eq1}. In this case we have infinitely many solutions.

	If $d_1< d_2+d_3$, then by Proposition \ref{thm2prop2},  \eqref{eq22} has finitely many solutions in $(n, m, \ell)$  unless there exist integers \[n_0, \ell_0,p_0,q_0, n_1,m_1,p_{1},q_{1},m_2, \ell_2,p_{2}, q_{2}\] satisfying \eqref{thm2eq2}. In this case also we have infinitely many solutions. This completes the proof of Theorem \ref{thm2}. \qed

If each minimal vanishing subsum of \eqref{eq22} contains precisely three terms with different exponents then we have $d_1=d_2=d_3=d$. Thus, we will get equations of the form 	
	\begin{equation}\label{eq40}
		a_i\alpha_i^n+b_i\beta_i^m+c_i\gamma_i^\ell=0, \quad 1\leq i\leq d.
	\end{equation} In the following remark, we show that \eqref{eq22} has finitely many solutions when the characteristic roots $\alpha_i,\beta_i$ and $\gamma_i$ are pairwise multiplicatively independent for all $i=1,\ldots, d$ and satisfy \eqref{eq40}.
	
		\begin{remark}\label{rem1}
			Suppose that the characteristic roots $\alpha_i,\beta_i$ and $\gamma_i$ are pairwise multiplicatively independent for all $i=1,\ldots,d$. Then after dividing the equation \eqref{eq40} by $c_i\gamma_i^{\ell}$ and using the Proposition \ref{browthm}, we get
			\begin{equation*}
				\mathcal{H}\left(\frac{a_i\alpha_i^n}{c_i\gamma_i^\ell}\right)\leq C_{18} \quad \text{and}\quad \mathcal{H}\left(\frac{b_i\beta_i^m}{c_i\gamma_i^\ell}\right)\leq C_{18}.
			\end{equation*}
			This implies, 
			\begin{equation*}
				\mathcal{H}\left(\frac{\alpha_i^n}{\gamma_i^\ell}\right)\leq C_{19} \quad \text{and}\quad \mathcal{H}\left(\frac{\beta_i^m}{\gamma_i^\ell}\right)\leq C_{20}.
			\end{equation*}
			 By Lemma \ref{lem2}, we can bound $n, m$ and $\ell$. Thus, we can find finite number of solutions for \eqref{eq22} if all the roots are pairwise multiplicatively independent.
		\end{remark}
		
If the characteristic roots $\alpha_i,\beta_i$ and $\gamma_i$ are pairwise multiplicatively dependent, then one can see that there are infinitely many solutions of \eqref{eq22} satisfying \eqref{eq40} in an arithmetic progression.
	\begin{remark}\label{rem2}
		 Suppose that the characteristic roots $\alpha_i,\beta_i$ and $\gamma_i$ are pairwise multiplicatively dependent. Then there exist $r_i, s_i, t_i, k_i\in \Z$ such that $\alpha_i^{s_i}=\beta_i^{r_i}$ and $\alpha_i^{t_i}=\gamma_i^{k_i}$. Without loss of generality, assume that $r_i<s_i $ and $t_i<k_i$. Then we get 
		\begin{equation*}
			\left(\frac{\beta_i}{\alpha_i}\right)^{r_i}=	\left(\frac{\gamma_i}{\alpha_i}\right)^{t_i}=1.
		\end{equation*}
		Suppose that  $(n_0,m_0,\ell_0)\in \N^{3}$ is a particular solution of \eqref{eq40}. Putting $h_i=lcm(r_i, t_i), 1\leq i\leq d$ and $h=lcm(h_1,\ldots, h_d)$, we have 
			\begin{align*}
			&U_{n_0+hu}+V_{m_0+hu}+W_{\ell_0+hu}\\&= \sum_{i=1}^{d}\left(a_i\alpha_i^{n_0+hu}+b_i\beta_i^{m_0+hu}+c_i\gamma_i^{\ell_0+hu} \right)\\
			&=\alpha_i^{hu}\left[a_i\alpha_i^{n_0}+ b_i\beta_i^{m_0} \left(\frac{\beta_i}{\alpha_i}\right)^{hu}+c_i\gamma_i^{\ell_0}\left(\frac{\gamma_i}{\alpha_i}\right)^{hu} \right]=0.
		\end{align*}for any $u\in \N$. So we can conclude that the set \[\{(n_0+hu, m_0+hu, \ell_0+hu): u\in \N\}\] form a solution for \eqref{eq22}. 
	
		\end{remark}

		{\bf Acknowledgment:} D.N. and S.S.R. is supported by grant from Science and Engineering Research Board (SERB)(File No.:CRG/2022/000268) and S.S.R. is also supported by National Board for Higher Mathematics (NBHM), Sanction Order No: 14053.

	\end{document}